\font\smallit=cmti10
\renewcommand\section{\@startsection {section}{1}{\z@}
	{-30pt \@plus -1ex \@minus -.2ex}
	{2.3ex \@plus.2ex}
	{\normalfont\normalsize\bfseries\boldmath}}
\renewcommand\subsection{\@startsection{subsection}{2}{\z@}
	{-3.25ex\@plus -1ex \@minus -.2ex}
	{1.5ex \@plus .2ex}
	{\normalfont\normalsize\bfseries\boldmath}}
\renewcommand{\@seccntformat}[1]{\csname the#1\endcsname. }
\newtheorem{theorem}{Theorem}
\newtheorem{corollary}{Corollary}
\theoremstyle{definition}
\newtheorem{remark}{Remark}
\begin{document}
	
	\begin{center}
		\uppercase{On the minimum size of subset and subsequence sums in integers} 
		\vskip 20pt
		\textbf{Jagannath Bhanja}\\
		{\smallit The Institute of Mathematical Sciences, A CI of Homi Bhabha National Institute, C.I.T. Campus, Taramani, Chennai-600113, India}\\
		{\tt jbhanja@imsc.res.in}\\
		
		\vskip 20pt
		
		\textbf{Ram Krishna Pandey}\\
		{\smallit Department of Mathematics, Indian Institute of Technology Roorkee, Roorkee-247667, India}\\
		{\tt ram.pandey@ma.iitr.ac.in}
	\end{center}
	\vskip 20pt

	\begin{abstract}
		Let $\mathcal{A}$ be a sequence of $rk$ terms which is made up of $k$ distinct integers each appearing exactly $r$ times in $\mathcal{A}$. The sum of all terms of a subsequence of $\mathcal{A}$ is called a subsequence sum of $\mathcal{A}$. For a nonnegative integer $\alpha \leq rk$, let $\Sigma_{\alpha} (\mathcal{A})$ be the set of all subsequence sums of $\mathcal{A}$ that correspond to the subsequences of length $\alpha$ or more. When $r=1$, we call the subsequence sums as subset sums and we write $\Sigma_{\alpha} (A)$ for $\Sigma_{\alpha} (\mathcal{A})$. In this article, using some simple  combinatorial arguments, we establish optimal lower bounds for the size of $\Sigma_{\alpha} (A)$ and $\Sigma_{\alpha} (\mathcal{A})$. As special cases, we also obtain some already known results in this study.
	\end{abstract}
	
	\vspace{10pt}
	
	\noindent {\bf 2020 Mathematics Subject Classification:} 11B75, 11B13, 11B30
	
	\vspace{10pt}
	\noindent {\bf Keywords:} subset sum, subsequence sum, sumset, restricted sumset

\section{Introduction}
Let $A$ be a set of $k$ integers. The sum of all elements of a subset of $A$ is called a \emph{subset sum} of $A$. So, the subset sum of the empty set is $0$. For a nonnegative integer $\alpha \leq k$, let
\[
\Sigma_{\alpha} (A):=\left\{\sum_{a \in A^{\prime}} a: A^{\prime} \subset A,~|A^{\prime}|\geq \alpha\right\} 
\]
and
\[
\Sigma^{\alpha} (A):=\left\{\sum_{a \in A^{\prime}} a: A^{\prime} \subset A,~|A^{\prime}|\leq k-\alpha\right\}. 
\]
That is, $\Sigma_{\alpha} (A)$ is the set of subset sums corresponding to the subsets of $A$ that are of the size at least $\alpha$ and $\Sigma^{\alpha} (A)$ is the set of subset sums corresponding to the subsets of $A$ that are of the size at most $k-\alpha$. So, $\Sigma_{\alpha} (A) = \sum_{a\in A}a - \Sigma^{\alpha} (A)$. Therefore $|\Sigma_{\alpha} (A)| = |\Sigma^{\alpha} (A)|$.

Now, we extend the above definitions for sequences of integers. Before we go for extension, we mention some notation that are used throughout the paper. 

Let $\mathcal{A}=(\underbrace{a_{1},\ldots,a_{1}}_{r~\text{copies}},
\underbrace{a_{2},\ldots,a_{2}}_{r~\text{copies}},\ldots,
\underbrace{a_{k},\ldots,a_{k}}_{r~\text{copies}})$ be a sequence of $rk$ terms, where $a_{1}, a_{2}, \ldots, a_{k}$ are distinct integers each appearing exactly $r$ times in $\mathcal{A}$. We denote this sequence by $\mathcal{A}=(a_{1},a_{2},\ldots,a_{k})_{r}$. If $\mathcal{A'}$ is a subsequence of $\mathcal{A}$, then we write $\mathcal{A'} \subset \mathcal{A}$. By $x \in \mathcal{A}$, we mean $x$ is a term in $\mathcal{A}$. For the number of terms in a sequence $\mathcal{A}$, we use the notation $|\mathcal{A}|$. For an integer $x$, we let $x*\mathcal{A}$ be the sequence which is obtained from by multiplying each term of $\mathcal{A}$ by $x$. For two nonempty sequences $\mathcal{A}$, $\mathcal{B}$, by $\mathcal{A} \cap \mathcal{B}$, we mean the sequence of all those terms that are in both $\mathcal{A}$ and $\mathcal{B}$. Furthermore, for integers $a$, $b$ with $b \geq a$, by $[a,b]_{r}$, we mean the sequence $(a, a+1, \ldots, b)_{r}$.

Let $\mathcal{A}=(a_{1},a_{2},\ldots,a_{k})_{r}$ be a sequence of integers with $rk$ terms. The sum of all terms of a subsequence of $\mathcal{A}$ is called a \emph{subsequence sum} of $\mathcal{A}$. For a nonnegative integer $\alpha \leq rk$, let
\[
\Sigma_{\alpha} (\mathcal{A}):=\left\{\sum_{a\in \mathcal{A^{\prime}}} a: \mathcal{A^{\prime}}\subset \mathcal{A},~|\mathcal{A^{\prime}}|\geq \alpha \right\} 
\]
and
\[
\Sigma^{\alpha} (\mathcal{A}):=\left\{\sum_{a\in \mathcal{A^{\prime}}} a: \mathcal{A^{\prime}}\subset \mathcal{A},~|\mathcal{A^{\prime}}| \leq rk-\alpha \right\}. 
\]
That is, $\Sigma_{\alpha} (\mathcal{A})$ is the set of subsequence sums corresponding to the subsequences of $\mathcal{A}$ that are of the size at least $\alpha$ and $\Sigma^{\alpha} (\mathcal{A})$ is the set of subsequence sums corresponding to the subsequences of $\mathcal{A}$ that are of the size at most $rk-\alpha$. Then in the same line with the subset sums, we have $|\Sigma_{\alpha} (\mathcal{A})|=|\Sigma^{\alpha} (\mathcal{A})|$ for all $0 \leq \alpha \leq rk$.

The set of subset sums $\Sigma_{\alpha} (A)$ and $\Sigma^{\alpha} (A)$ and the set of subsequence sums $\Sigma_{\alpha} (\mathcal{A})$ and $\Sigma^{\alpha} (\mathcal{A})$ may also be written as unions of sumsets:

For a finite set $A$ of $k$ integers and for positive integers $h, r$, the \emph{$h$-fold sumset} $hA$ is the collection of all sums of $h$ not-necessarily-distinct elements of $A$, the \emph{$h$-fold restricted sumset} $h~\hat{}A$ is the collection of all sums of $h$ distinct elements of $A$, and the \emph{generalized sumset} $h^{(r)} A$ is the collection of all sums of $h$ elements of $A$ with at most $r$ repetitions for each element (see \cite{mistri14}). Then $\Sigma_{\alpha} (A) = \bigcup_{h=\alpha}^{k} h~\hat{}A$,  $\Sigma^{\alpha} (A) = \bigcup_{h=0}^{k-\alpha} h~\hat{}A$, $\Sigma_{\alpha} (\mathcal{A}) = \bigcup_{h=\alpha}^{rk} h^{(r)}A$, and  $\Sigma^{\alpha} (\mathcal{A}) = \bigcup_{h=0}^{rk-\alpha} h^{(r)}A$, where $\mathcal{A} = (A)_{r}$ and $0~\hat{}A = 0^{(r)}A =\{0\}$.

An important problem in additive number theory is to find the optimal lower bound for $|\Sigma_{\alpha} (A)|$ and  $|\Sigma_{\alpha} (\mathcal{A})|$. Such problems are very useful in some other combinatorial problems such as the zero-sum problems (see \cite{devos09, erdos, grynkiewicz1, peng}). Nathanson \cite{nathu95} established the optimal lower bound for $|\Sigma_{1} (A)|$ for sets of integers $A$. Mistri and Pandey \cite{mistri15, mistri16} and Jiang and Li \cite{jiang} extended Nathanson's results to $\Sigma_{1} (\mathcal{A})$ for sequences of integers $\mathcal{A}$. Note that these subset and subsequence sums may also be studied in any abelian group (for earlier works, in case $\alpha=0$ and $\alpha=1$, see \cite{eric12, eric13, devos07, erdos, freeze, griffiths, hamidoune98, hamidoune08, olson}). Recently, Balandraud \cite{eric17} proved the optimal lower bound for $|\Sigma_{\alpha} (A)|$ in the finite prime field $\mathbb{F}_{p}$, where $p$ is a prime number. Inspired by Balandraud's work \cite{eric17}, in this paper we establish optimal lower bounds for $|\Sigma_{\alpha} (A)|$ and $|\Sigma_{\alpha} (\mathcal{A})|$ in the group of integers. Note that, in \cite{bhanja20}, we have already settled this problem when the set $A$ (or sequence $\mathcal{A}$) contains nonnegative or nonpositive integers. So, in this paper we consider the sets (or sequences) which may contain both positive and negative integers.

A similar problem for sumsets also have been extensively studied in the past (see \cite{alon95, alon96, bajnok, cauchy, dav35, dav47, silva94, eliahou, erdos, nathu95, nathu} and the references therein). For some recent works along this line, one can also see \cite{bajnok15, bajnok16, jagannath, jagannath21, bhanja21, bhanja19, mistri14, mistri18, monopoli}.

In Section \ref{S2}, we prove optimal lower bounds for $|\Sigma_{\alpha} (A)|$ for finite sets of integers $A$. In Section \ref{S3}, we extend the results of Section \ref{S2} to sequences of integers.

The following results are used to prove the results in this paper.
\begin{theorem}\textup{\cite[Theorem 1.4]{nathu}\label{sumset-thm}}
	Let $A$, $B$ be nonempty finite sets of integers. Set $A+B = \{a+b: a \in A, b \in B\}$. Then
	\[|A+B| \geq |A|+|B|-1.\]
	This lower bound is optimal.
\end{theorem}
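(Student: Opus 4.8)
The plan is to exploit the linear order on $\mathbb{Z}$: unlike in a general abelian group, a nonempty finite set of integers has a well-defined least and greatest element, and this lets us build an explicit strictly increasing chain of sums inside $A+B$. Write $|A| = m$ and $|B| = n$, and list the elements in increasing order as $a_1 < a_2 < \cdots < a_m$ and $b_1 < b_2 < \cdots < b_n$. First I would observe that the $m+n-1$ integers
\[
a_1 + b_1 < a_1 + b_2 < \cdots < a_1 + b_n < a_2 + b_n < a_3 + b_n < \cdots < a_m + b_n
\]
form a strictly increasing sequence: along the first stretch we fix $a_1$ and increase the $B$-coordinate, and along the second stretch we fix $b_n$ and increase the $A$-coordinate, so each successive term strictly exceeds the previous one. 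Every term of this sequence lies in $A+B$ by construction, and strict monotonicity guarantees they are pairwise distinct, whence $|A+B| \geq m+n-1 = |A|+|B|-1$.

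For the optimality claim I would exhibit a family attaining equality: take $A$ and $B$ to be finite arithmetic progressions with a common difference $d$, say $A = \{0, d, 2d, \ldots, (m-1)d\}$ and $B = \{0, d, 2d, \ldots, (n-1)d\}$. Then $A+B = \{0, d, 2d, \ldots, (m+n-2)d\}$, which has exactly $m+n-2+1 = |A|+|B|-1$ elements. A brief remark that dilating by $d$ and translating does not change sumset sizes reduces this to the case $d=1$, i.e. $A=[0,m-1]_1$ and $B=[0,n-1]_1$ in the paper's notation.

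I do not expect a genuine obstacle here; the result is elementary, and the only point requiring a moment's care is verifying that the displayed chain is strictly increasing at the single ``corner'' where we switch from incrementing the $B$-index to incrementing the $A$-index — but this is immediate since $a_1 + b_n < a_2 + b_n$ because $a_1 < a_2$. It is worth noting that this monotone-chain argument is exactly the feature of $\mathbb{Z}$ that will be used repeatedly in Sections~\ref{S2} and~\ref{S3}, so stating it cleanly here as Theorem~\ref{sumset-thm} is the natural starting point.
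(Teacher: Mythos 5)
Your argument is correct: the monotone chain $a_1+b_1 < a_1+b_2 < \cdots < a_1+b_n < a_2+b_n < \cdots < a_m+b_n$ exhibits $|A|+|B|-1$ distinct elements of $A+B$, and the arithmetic-progression example shows the bound is attained. The paper itself gives no proof of this statement --- it is quoted from Nathanson's book as a known tool --- and your proof is precisely the standard one found there, so there is nothing to reconcile.
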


\begin{theorem}\textup{\cite[Theorem 1.3]{nathu}\label{multifold-sumset-thm}}
	Let $A$ be a nonempty finite set of integers and $h$ be a positive integer. Then
	\[|hA| \geq h|A|-h+1.\]
	This lower bound is optimal.
\end{theorem}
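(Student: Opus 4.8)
I would prove the inequality by induction on $h$, with Theorem~\ref{sumset-thm} doing all the work, and then settle optimality with an arithmetic progression.

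The base case $h=1$ is immediate, since $|1A| = |A| = 1\cdot|A| - 1 + 1$. For the inductive step I would write $hA = (h-1)A + A$; both summands are nonempty finite sets of integers, so Theorem~\ref{sumset-thm} gives
\[
|hA| = |(h-1)A + A| \geq |(h-1)A| + |A| - 1 \geq \big((h-1)|A| - h + 2\big) + |A| - 1 = h|A| - h + 1,
\]
where the last inequality uses the induction hypothesis $|(h-1)A| \geq (h-1)|A| - (h-1) + 1$. This closes the induction.

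For optimality I would take $A = \{0,1,\ldots,k-1\}$ with $k = |A|$: then $hA = \{0,1,\ldots,h(k-1)\}$ has exactly $h(k-1)+1 = h|A| - h + 1$ elements, so the bound is attained for every $h$ and every $k$, and hence is best possible.

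I do not expect a genuine obstacle here. The whole argument rests on the elementary integer sumset bound of Theorem~\ref{sumset-thm}, which itself follows by writing $A = \{a_1 < \cdots < a_m\}$ and $B = \{b_1 < \cdots < b_n\}$ and reading off the strictly increasing chain $a_1+b_1 < a_1+b_2 < \cdots < a_1+b_n < a_2+b_n < \cdots < a_m+b_n$ of $m+n-1$ distinct sums in $A+B$. The one point worth flagging is that the extremal set is exactly an arithmetic progression; this is the template to keep in mind for the extremal configurations in the subset- and subsequence-sum theorems established later in the paper.
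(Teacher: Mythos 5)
Your proof is correct: the induction on $h$ via $hA=(h-1)A+A$ and Theorem~\ref{sumset-thm}, together with the arithmetic progression $A=\{0,1,\dots,k-1\}$ for optimality, is exactly the standard argument for this classical fact, which the paper itself does not reprove but simply cites from Nathanson's book. Your closing remark is also apt: arithmetic progressions (and their sequence analogues $[a,b]_r$) are precisely the extremal configurations used throughout the paper's optimality verifications.
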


\begin{theorem}\textup{\cite[Theorem 4]{eric17} \label{balandraud-thm}}
	Let $A$ be a nonempty subset of $\mathbb{F}_{p}$ such that $A\cap (-A)=\emptyset$. Then for any integer $\alpha \in [0,|A|]$, we have
	\begin{equation*}
		|\Sigma_{\alpha} (A)| \geq \min \left\{p, \frac{|A|(|A|+1)}{2}-\frac{\alpha(\alpha+1)}{2}+1 \right\}.
	\end{equation*}
	This lower bound is optimal.
\end{theorem}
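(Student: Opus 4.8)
The plan is to reduce to the complementary quantity and then attack it by the polynomial method. Since $|\Sigma_{\alpha}(A)| = |\Sigma^{\alpha}(A)|$ and $\Sigma^{\alpha}(A) = \bigcup_{h=0}^{m} h\,\hat{}A$ with $m := |A| - \alpha$, it suffices to bound from below the number of subset sums arising from subsets of size at most $m$; this form is the more symmetric one and is what I would work with. Observe first that the target $\binom{|A|+1}{2} - \binom{\alpha+1}{2} + 1$ is quadratic in $|A|$, whereas the Dias da Silva--Hamidoune theorem resolving the Erd\H{o}s--Heilbronn conjecture bounds a \emph{single} restricted sumset only by $|h\,\hat{}A| \geq \min\{p,\, h(|A|-h)+1\}$, whose maximum over $h$ is about $|A|^{2}/4$. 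Hence the real content is that the layers $h\,\hat{}A$, $h=0,1,\dots,m$, spread out across $\mathbb{F}_p$ rather than piling up near a single point, and this is exactly where the hypothesis $A\cap(-A)=\emptyset$ enters: for a set like $\{1,-1,2,-2,\dots\}$ all layers cluster around $0$ and every $\Sigma_{\alpha}$ is tiny, so the hypothesis is rigid, not cosmetic. (It also forces $0\notin A$, which is already what one needs for the endpoint cases $\alpha=|A|$ and $\alpha=|A|-1$.)

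The natural tool is the Combinatorial Nullstellensatz, in the spirit of the polynomial-method proof of Erd\H{o}s--Heilbronn. Assuming for contradiction that $t := |\Sigma^{\alpha}(A)| < \min\{p,\, \binom{|A|+1}{2} - \binom{\alpha+1}{2} + 1\}$ (so in particular $t<p$, else there is nothing to prove), one builds a polynomial of the shape
\[
F \;=\; \prod_{s\,\in\,\Sigma^{\alpha}(A)} \bigl(L - s\bigr)\cdot W,
\]
where $L$ is a linear form in the variables, $W$ is a Vandermonde-type factor enforcing distinctness, and the variables range over $A$ together with an auxiliary element $0$; the aim is to arrange that $F$ vanishes on the whole box while some admissible top monomial $\prod_i x_i^{d_i}$ has a nonzero coefficient, contradicting the Nullstellensatz. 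The degree budget must be calibrated so that this forces $t \geq \binom{|A|+1}{2} - \binom{\alpha+1}{2} + 1$ exactly, and the hypothesis $A\cap(-A)=\emptyset$ is consumed precisely in the coefficient computation --- a Dyson/Good-type constant-term evaluation --- to ensure that the relevant integer is not divisible by $p$: it rules out the algebraic relations $a=-b$ with $a,b\in A\cup\{0\}$ that would otherwise annihilate the coefficient modulo $p$ or drop $0$ into one of the layers. The cutoff at $p$ in the $\min$ is then automatic.

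The main obstacle --- and the heart of the matter --- is making the polynomial ``see'' all of the layers $h\leq m$ simultaneously. A polynomial built over the set $A\cup\{0\}$ (a single copy of $0$) only certifies that $\Sigma^{\alpha}(A)$ contains the one restricted sumset $m\,\hat{}(A\cup\{0\}) = m\,\hat{}A \cup (m-1)\,\hat{}A$, i.e. two consecutive layers, which merely recovers the linear bound $\min\{p,\, m(\alpha+1)+1\}$ and is useless for small $\alpha$. To reach the quadratic bound one would like several copies of $0$ at one's disposal, but no polynomial can vanish exactly on the locus where two \emph{nonzero} coordinates coincide without also vanishing where two coordinates equal $0$ (the Zariski closure of the former locus is the whole coincidence locus), so the obvious fix fails and a genuinely subtler construction --- or extra structural input --- is needed. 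An alternative route is induction on $|A|$, via $\Sigma_{\alpha}(A) \supseteq \Sigma_{\alpha}(A') \cup \bigl(a + \Sigma_{\alpha-1}(A')\bigr)$ for $A' = A\setminus\{a\}$ together with a Cauchy--Davenport-type estimate for the union, with $A\cap(-A)=\emptyset$ used to keep the overlap small. Everything else is routine: the reduction above, the degree arithmetic matching the stated constant, and optimality --- witnessed by $A=\{1,2,\dots,|A|\}$ inside $\mathbb{F}_p$ for $p$ large, whose subsets of size $\geq\alpha$ have sums filling the integer interval $\bigl[\binom{\alpha+1}{2},\, \binom{|A|+1}{2}\bigr]$, of length $\binom{|A|+1}{2} - \binom{\alpha+1}{2} + 1$.
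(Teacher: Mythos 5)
This statement is not proved in the paper at all: it is Balandraud's theorem, quoted verbatim from \cite{eric17} and used as a black box (the paper's own Theorem \ref{emptyset-thm} is deduced from it by viewing a finite set of integers as residues modulo a large prime). So the only question is whether your proposal proves the statement on its own, and it does not. The easy outer layer is fine: the reduction to $\Sigma^{\alpha}(A)$, the identification of the Combinatorial Nullstellensatz as the right engine (Balandraud's argument is indeed a polynomial-method one), and the optimality witness $A=\{1,2,\dots,|A|\}$ for $p$ large. But the entire content of the theorem is the quadratic lower bound, and at exactly that point your text concedes that the natural polynomial over the grid built from $A\cup\{0\}$ certifies only a linear bound, that ``a genuinely subtler construction --- or extra structural input --- is needed,'' and stops. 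That missing construction \emph{is} the theorem. Any Nullstellensatz argument of the shape you describe needs a grid $\prod_i X_i$ with $\sum_i(|X_i|-1)$ of order $\frac{|A|(|A|+1)}{2}-\frac{\alpha(\alpha+1)}{2}$, hence variable sets of linearly growing sizes; one must then both (a) show that every sum arising from this enlarged grid still lies in $\Sigma_{\alpha}(A)$ --- this is where $A\cap(-A)=\emptyset$ does its real work, not merely in a coefficient computation --- and (b) exhibit an admissible top monomial with coefficient nonzero mod $p$. Neither step is attempted.

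The fallback you mention, induction via $\Sigma_{\alpha}(A)\supseteq \Sigma_{\alpha}(A')\cup\bigl(a+\Sigma_{\alpha-1}(A')\bigr)$ ``together with a Cauchy--Davenport-type estimate for the union,'' is not a proof either: Cauchy--Davenport controls sumsets, not unions, and $|X\cup Y|=|X|+|Y|-|X\cap Y|$ forces you to bound the overlap of the two pieces, which is precisely the difficulty. Each induction step must gain on the order of $|A|$ new elements to reach the quadratic total, and nothing in the sketch delivers that in $\mathbb{F}_p$, where interval arguments are unavailable. If your aim is to reprove Theorem \ref{balandraud-thm}, you must supply the actual construction from \cite{eric17} (or an equivalent one); otherwise the correct move is the one the paper makes, namely to cite the result and build on it.
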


\section{Subset sum}\label{S2}

In Theorem \ref{emptyset-thm} and Corollary \ref{zero-cor}, we prove optimal lower bound for $|\Sigma_{\alpha} (A)|$ under the assumptions $A\cap (-A)=\emptyset$ and $A\cap (-A)=\{0\}$, respectively. In Theorem \ref{improved-subset-thm} and Corollary \ref{improved-subset-cor}, we prove optimal lower bound for $|\Sigma_{\alpha} (A)|$ for arbitrary finite sets of integers $A$. The bounds in Theorem \ref{improved-subset-thm} and Corollary \ref{improved-subset-cor} depends on the number of positive elements and the number of negative elements in set $A$. In Corollary \ref{general-subset-cor}, we prove lower bounds for $|\Sigma_{\alpha} (A)|$, which holds for arbitrary finite sets of integers $A$ and only depend on the total number of elements of $A$ not the number of positive and negative elements of $A$.   

\begin{theorem}\label{emptyset-thm}
	Let $A$ be a set of $k$ integers such that $A\cap (-A)=\emptyset$. For any integer $\alpha \in [0,k]$, we have
	\begin{equation}\label{emptyset-eqn}
		|\Sigma_{\alpha} (A)| \geq \frac{k(k+1)}{2}-\frac{\alpha(\alpha+1)}{2}+1.
	\end{equation}
	This lower bound is optimal.
\end{theorem}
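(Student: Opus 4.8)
\emph{Proof proposal.} The plan is to deduce the statement from Balandraud's theorem (Theorem \ref{balandraud-thm}) by reducing $A$ modulo a suitably large prime. Write $\sigma(X)=\sum_{x\in X}x$ for $X\subseteq A$, and set $A^{+}=\{a\in A:a>0\}$, $A^{-}=\{a\in A:a<0\}$; note $0\notin A$ since $A\cap(-A)=\emptyset$. Every subset sum of $A$ lies in $[\sigma(A^{-}),\sigma(A^{+})]$, an interval of length $\sum_{a\in A}|a|$, so any two distinct elements of $\Sigma_{0}(A)$ differ by a nonzero integer of absolute value $\le\sum_{a\in A}|a|$. Also, for any $a,a'\in A$ the integer $a+a'$ is nonzero — it equals $2a\ne0$ when $a=a'$, and when $a\ne a'$ the hypothesis $A\cap(-A)=\emptyset$ rules out $a'=-a$ — and $|a+a'|\le 2\sum_{b\in A}|b|$.

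So I would fix a prime $p>\max\{\,2\sum_{a\in A}|a|,\ \tfrac{k(k+1)}{2}\,\}$, which exists since there are infinitely many primes. Then reduction mod $p$ is injective on $\Sigma_{0}(A)$ (hence on $A$, so $|\overline A|=k$, where $\overline A\subseteq\mathbb{F}_{p}$ denotes the image of $A$), and $a+a'\not\equiv0\pmod p$ for all $a,a'\in A$, i.e.\ $\overline A\cap(-\overline A)=\emptyset$. Since the size-preserving bijection $A\to\overline A$ on subsets carries subset sums to their residues, $\Sigma_{\alpha}(\overline A)$ is exactly the image of $\Sigma_{\alpha}(A)$ under reduction; combined with the injectivity of reduction on $\Sigma_{\alpha}(A)\subseteq\Sigma_{0}(A)$ this gives $|\Sigma_{\alpha}(A)|=|\Sigma_{\alpha}(\overline A)|$. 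Applying Theorem \ref{balandraud-thm} to $\overline A$ and using $p>\tfrac{k(k+1)}{2}$,
\[
|\Sigma_{\alpha}(A)|=|\Sigma_{\alpha}(\overline A)|\ \ge\ \min\Big\{\,p,\ \tfrac{k(k+1)}{2}-\tfrac{\alpha(\alpha+1)}{2}+1\,\Big\}\ =\ \tfrac{k(k+1)}{2}-\tfrac{\alpha(\alpha+1)}{2}+1,
\]
which is \eqref{emptyset-eqn}. (The degenerate case $k=0$ is trivial: $\Sigma_{\alpha}(\emptyset)=\{0\}$.)

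For optimality I would exhibit $A=\{1,2,\dots,k\}$, which satisfies $A\cap(-A)=\emptyset$: the sums of its subsets of size at least $\alpha$ run over exactly the integers in $\big[\tfrac{\alpha(\alpha+1)}{2},\tfrac{k(k+1)}{2}\big]$ — the minimum $1+\cdots+\alpha$ comes from $\{1,\dots,\alpha\}$, the maximum $1+\cdots+k$ from $A$ itself, and each intermediate value is attained — so $|\Sigma_{\alpha}(A)|=\tfrac{k(k+1)}{2}-\tfrac{\alpha(\alpha+1)}{2}+1$, showing \eqref{emptyset-eqn} cannot be improved. The argument is short and I do not expect a real obstacle; the only point demanding care is the size of $p$, which must be large enough both that distinct subset sums of $A$ stay distinct modulo $p$ and that the hypothesis $A\cap(-A)=\emptyset$ is inherited by $\overline A$ — precisely what the displayed lower bound on $p$ guarantees.
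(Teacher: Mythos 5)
Your proposal is correct and follows essentially the same route as the paper: embed $A$ into $\mathbb{F}_p$ for a prime $p$ large enough that the elements stay distinct and the condition $A\cap(-A)=\emptyset$ is preserved, apply Theorem \ref{balandraud-thm}, and verify optimality with $A=[1,k]$. The only (harmless) difference is that you take $p>2\sum_{a\in A}|a|$ to make reduction injective on all subset sums, whereas the paper only needs $p>2\max\{|a|:a\in A\}$ together with the observation that reduction maps $\Sigma_{\alpha}(A)$ onto $\Sigma_{\alpha}(\overline{A})$, so that $|\Sigma_{\alpha}(A)|\geq|\Sigma_{\alpha}(\overline{A})|$ suffices.
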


\begin{proof}
	Let $p$ be a prime number such that $p>\max \left\{ 2\max^{\ast}(A),\frac{k(k+1)}{2}-\frac{\alpha(\alpha+1)}{2}+1 \right\}$, where $\max^{\ast}(A)=\max\{|a|: a\in A\}$. Now, the elements of $A$ can be thought of residue classes modulo $p$. Since $p>2\max^{\ast}(A)$, any two elements of $A$ are different modulo $p$. Furthermore $ A \cap (-A)=\emptyset $. Hence, by Theorem \ref{balandraud-thm}, we get
	\[|\Sigma_{\alpha} (A)| \geq \frac{k(k+1)}{2}-\frac{\alpha(\alpha+1)}{2}+1.\]
	
	Next, to verify that the lower bound in (\ref{emptyset-eqn}) is optimal, let $A=[1,k]$. Then $A\cap (-A)=\emptyset$ and
	\begin{align*}
		\Sigma_{\alpha} (A) & \subset \left[1+2+\cdots+\alpha, 1+2+\cdots+k\right]
		= \left[\frac{\alpha(\alpha+1)}{2}, \frac{k(k+1)}{2}\right].
	\end{align*}
	Therefore
	\[|\Sigma_{\alpha} (A)|\leq \frac{k(k+1)}{2}-\frac{\alpha(\alpha+1)}{2}+1.\]
	This together with (\ref{emptyset-eqn}) gives
	\[|\Sigma_{\alpha} (A)|
	=\frac{k(k+1)}{2}-\frac{\alpha(\alpha+1)}{2}+1.\]
	Thus, the lower bound in (\ref{emptyset-eqn}) is optimal. This completes the proof of the theorem.
\end{proof}

\begin{corollary}\label{zero-cor}
	Let $A$ be a set of $k$ integers such that $A\cap (-A)=\{0\}$. For any integer $\alpha\in[0,k]$, we have
	\begin{equation}\label{zero-eqn}
		|\Sigma_{\alpha} (A)| \geq \frac{k(k-1)}{2}-\frac{\alpha(\alpha-1)}{2}+1.
	\end{equation}
	This lower bound is optimal.
\end{corollary}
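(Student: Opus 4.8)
The plan is to reduce the statement to Theorem \ref{emptyset-thm} by peeling off the element $0$. Since $A\cap(-A)=\{0\}$, in particular $0\in A$, so I may write $A=B\cup\{0\}$ with $B=A\setminus\{0\}$, a set of exactly $k-1$ integers. The first step is to observe that $B\cap(-B)=\emptyset$: any common element $b$ would be nonzero and satisfy $b,-b\in A$, contradicting $A\cap(-A)=\{0\}$. So $B$ is eligible for Theorem \ref{emptyset-thm}.

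The key step is the identity $\Sigma_{\alpha}(A)=\Sigma_{\max\{\alpha-1,0\}}(B)$. Indeed, a subset $A'\subset A$ with $|A'|\geq\alpha$ either avoids $0$, in which case $A'\subset B$ with $|A'|\geq\alpha$, or contains $0$, in which case $A'\setminus\{0\}\subset B$ has $|A'\setminus\{0\}|\geq\alpha-1$ and the same sum as $A'$; conversely, adjoining $0$ to any $B'\subset B$ with $|B'|\geq\alpha-1$ produces a subset of $A$ of size $\geq\alpha$ with unchanged sum. Since $\Sigma_{\alpha}(B)\subseteq\Sigma_{\alpha-1}(B)$, these two directions give the claimed equality. (When $\alpha=0$ the threshold "$\alpha-1$" is simply read as $0$, and the statement is the trivial $\Sigma_{0}(A)=\Sigma_{0}(B)$.)

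Now I apply Theorem \ref{emptyset-thm} to $B$ with parameter $\alpha-1$, which lies in $[0,k-1]$ whenever $\alpha\in[1,k]$ (and with parameter $0$ when $\alpha=0$), to obtain
\[
|\Sigma_{\alpha}(A)|=|\Sigma_{\alpha-1}(B)|\geq\frac{(k-1)k}{2}-\frac{(\alpha-1)\alpha}{2}+1=\frac{k(k-1)}{2}-\frac{\alpha(\alpha-1)}{2}+1,
\]
which is exactly (\ref{zero-eqn}). For optimality I would take $A=[0,k-1]=\{0,1,\ldots,k-1\}$, so that $A\cap(-A)=\{0\}$. Every subset of $A$ of size $\geq\alpha$ has sum at least $0+1+\cdots+(\alpha-1)=\frac{\alpha(\alpha-1)}{2}$ and at most $0+1+\cdots+(k-1)=\frac{k(k-1)}{2}$, so $\Sigma_{\alpha}(A)\subseteq\left[\frac{\alpha(\alpha-1)}{2},\frac{k(k-1)}{2}\right]$ and hence $|\Sigma_{\alpha}(A)|\leq\frac{k(k-1)}{2}-\frac{\alpha(\alpha-1)}{2}+1$; combined with (\ref{zero-eqn}) this forces equality.

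I do not anticipate a genuine obstacle: the argument is essentially a one-element reduction. The only points requiring care are the bookkeeping at the boundary value $\alpha=0$, and verifying that adjoining/removing $0$ really is a sum-preserving, size-shifting bijection between the relevant families of subsets — i.e. that one recovers precisely the threshold $\alpha-1$ for $B$ rather than something weaker — which is what makes the exact formula (rather than a lossy estimate) come out.
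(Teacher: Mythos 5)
Your proposal is correct and follows essentially the same route as the paper: delete $0$, note that $A\setminus\{0\}$ satisfies the hypothesis of Theorem \ref{emptyset-thm}, use the sum-preserving shift $\Sigma_{0}(A)=\Sigma_{0}(A\setminus\{0\})$ and $\Sigma_{\alpha}(A)=\Sigma_{\alpha-1}(A\setminus\{0\})$ for $\alpha\geq 1$, and verify optimality with $A=[0,k-1]$. The only (minor) difference is that the paper first disposes of the degenerate case $A=\{0\}$, where $A\setminus\{0\}$ is empty and Theorem \ref{emptyset-thm} cannot be invoked; your reduction should note this case separately as well.
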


\begin{proof}
	If $A=\{0\}$, then $\Sigma_{\alpha} (A)=\{0\}$. Therefore $|\Sigma_{\alpha} (A)|=1$, and (\ref{zero-eqn}) holds. So, let $A\neq \{0\}$ and set $A^{\prime}=A\setminus \{0\}$. Then it is easy to see that $\Sigma_{0}(A)=\Sigma_{0}(A^{\prime})$ and $\Sigma_{\alpha}(A)=\Sigma_{\alpha-1}(A^{\prime})$ for $\alpha\geq 1$. Since $A^{\prime}\cap(-A^{\prime})=\emptyset$, by Theorem \ref{emptyset-thm}, we get
	\begin{equation*}
		|\Sigma_{0} (A)|=|\Sigma_{0} (A^{\prime})| \geq \frac{k(k-1)}{2}+1
	\end{equation*}
	and
	\begin{equation*}
		|\Sigma_{\alpha} (A)| = |\Sigma_{\alpha-1} (A^{\prime})|
		\geq \frac{k(k-1)}{2}-\frac{\alpha(\alpha-1)}{2}+1
	\end{equation*}
	for $\alpha\geq 1$. Hence (\ref{zero-eqn}) is established. 
	
	Now, let $A=[0, k-1]$. Then $A\cap (-A)=\{0\}$ and $\Sigma_{\alpha} (A) \subset \left[\frac{\alpha(\alpha-1)}{2}, \frac{k(k-1)}{2}\right]$. Therefore  $|\Sigma_{\alpha} (A)| \leq \frac{k(k-1)}{2}-\frac{\alpha(\alpha-1)}{2}+1$. This together with (\ref{zero-eqn}) gives that the lower bound in (\ref{zero-eqn}) is optimal. 
\end{proof}

\begin{remark}
	Nathanson's theorem \cite[Theorem 3]{nathu95} is a particular case of Theorem \ref{emptyset-thm} and Corollary \ref{zero-cor}, for $\alpha=1$.
\end{remark}

\begin{theorem}\label{improved-subset-thm}
	Let $n$ and $p$ be positive integers and $A$ be a set of $n$ negative and $p$ positive integers. Let $\alpha \in [0, n+p]$ be an integer. 
	\begin{enumerate}[label=\upshape(\roman*)]
		\item If $\alpha \leq n$ and $\alpha \leq p$, then $|\Sigma_{\alpha} (A)| \geq \frac{n(n+1)}{2}+\frac{p(p+1)}{2}+1$.
		
		\item If $\alpha \leq n$ and $\alpha > p$, then $|\Sigma_{\alpha} (A)| \geq \frac{n(n+1)}{2}+\frac{p(p+1)}{2}-\frac{(\alpha-p)(\alpha-p+1)}{2}+1$.
		
		\item If $\alpha > n$ and $\alpha \leq p$, then $|\Sigma_{\alpha} (A)| \geq \frac{n(n+1)}{2}+\frac{p(p+1)}{2}-\frac{(\alpha-n)(\alpha-n+1)}{2}+1$.
		
		\item If $\alpha > n$ and $\alpha > p$, then $|\Sigma_{\alpha} (A)| \geq \frac{n(n+1)}{2}+\frac{p(p+1)}{2}
		-\frac{(\alpha-n)(\alpha-n+1)}{2}-\frac{(\alpha-p)(\alpha-p+1)}{2}+1$.
	\end{enumerate}
	These lower bounds are optimal.
\end{theorem}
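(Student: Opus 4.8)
The plan is to decompose $A$ as $A=A^{+}\cup A^{-}$, where $A^{+}$ is the set of its $p$ positive elements and $A^{-}$ the set of its $n$ negative elements; thus $A^{+}\cap(-A^{+})=\emptyset$ and $A^{-}\cap(-A^{-})=\emptyset$. Put $\sigma^{+}=\sum_{a\in A^{+}}a$ and $\sigma^{-}=\sum_{a\in A^{-}}a$, and for a real number $x$ write $x^{+}=\max\{x,0\}$. Since $A^{+}$ and $A^{-}$ are disjoint, the union of a subset of $A^{+}$ of size $\ge i$ and a subset of $A^{-}$ of size $\ge j$ is a subset of $A$ of size $\ge i+j$, so $\Sigma_{i}(A^{+})+\Sigma_{j}(A^{-})\subseteq\Sigma_{\alpha}(A)$ whenever $i+j\ge\alpha$, $0\le i\le p$, $0\le j\le n$. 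The idea is to pick suitable pieces of this form, bound their sizes using Theorem~\ref{emptyset-thm} (applied to $A^{+}$, to $A^{-}$, and to certain $\alpha$-element subsets), and glue them with Theorem~\ref{sumset-thm}. Note that (i) holds as soon as $\alpha\le n$ or $\alpha\le p$, so it suffices to run one argument under the hypothesis $\alpha\le n$ (yielding (i) and (ii)), its mirror image under $\alpha\le p$ (yielding (i) and (iii)), and a separate argument for the remaining case $\alpha>n$ and $\alpha>p$ (yielding (iv)).

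Assume $\alpha\le n$. Set $V=\Sigma_{0}(A^{+})+\Sigma_{\alpha}(A^{-})\subseteq\Sigma_{\alpha}(A)$; by Theorems~\ref{emptyset-thm} and \ref{sumset-thm}, $|V|\ge\frac{p(p+1)}{2}+\frac{n(n+1)}{2}-\frac{\alpha(\alpha+1)}{2}+1$, and $\max V=\sigma^{+}+M$, where $M$ is the sum of the $\alpha$ elements of $A^{-}$ of smallest absolute value (the largest subset sum of length $\ge\alpha$ inside $A^{-}$). Let $\beta=(\alpha-p)^{+}$ and $U=\sigma^{+}+\Sigma_{\beta}(A^{-})\subseteq\Sigma_{\alpha}(A)$, which is legitimate because taking all $p$ elements of $A^{+}$ together with $\ge\beta$ elements of $A^{-}$ gives length $\ge p+\beta\ge\alpha$. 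If $B\subseteq A^{-}$ is the set of those $\alpha$ smallest-absolute-value elements, then every element of $\Sigma_{\beta}(B)\subseteq\Sigma_{\beta}(A^{-})$ is at least $\sum_{b\in B}b=M$, with equality only for the full sum of $B$; hence, since $B\cap(-B)=\emptyset$, Theorem~\ref{emptyset-thm} applied to $B$ shows that $U$ contains at least $|\Sigma_{\beta}(B)|-1\ge\frac{\alpha(\alpha+1)}{2}-\frac{\beta(\beta+1)}{2}$ values strictly above $\max V$, all of which lie in $\Sigma_{\alpha}(A)\setminus V$. Adding them to $V$ gives $|\Sigma_{\alpha}(A)|\ge\frac{p(p+1)}{2}+\frac{n(n+1)}{2}-\frac{\beta(\beta+1)}{2}+1$, which is the bound of (i) if $\alpha\le p$ (so $\beta=0$) and of (ii) if $\alpha>p$ (so $\beta=\alpha-p$). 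Exchanging $A^{+}\leftrightarrow A^{-}$ and $p\leftrightarrow n$ repeats the computation and yields (i) again together with (iii).

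Now suppose $\alpha>n$ and $\alpha>p$; since $\alpha\le n+p$ we have $1\le\alpha-p\le n$ and $1\le\alpha-n\le p$. Put $U=\sigma^{+}+\Sigma_{\alpha-p}(A^{-})\subseteq\Sigma_{\alpha}(A)$ and $V=\sigma^{-}+\Sigma_{\alpha-n}(A^{+})\subseteq\Sigma_{\alpha}(A)$. Every element of $U$ is at least $\sigma^{+}+\sigma^{-}$ and every element of $V$ is at most $\sigma^{+}+\sigma^{-}$, so $|U\cap V|\le 1$, and by Theorem~\ref{emptyset-thm}, $|\Sigma_{\alpha}(A)|\ge|U|+|V|-1\ge\frac{n(n+1)}{2}-\frac{(\alpha-p)(\alpha-p+1)}{2}+\frac{p(p+1)}{2}-\frac{(\alpha-n)(\alpha-n+1)}{2}+1$, which is (iv). For optimality in all four cases, take $A=\{-n,\dots,-1\}\cup\{1,\dots,p\}$: a subset of size $\ge\alpha$ uses at least $(\alpha-n)^{+}$ positive and at least $(\alpha-p)^{+}$ negative elements, so its sum lies between $\big(1+\cdots+(\alpha-n)^{+}\big)+\sigma^{-}$ and $\sigma^{+}-\big(1+\cdots+(\alpha-p)^{+}\big)$; thus $\Sigma_{\alpha}(A)$ is contained in an interval with $\frac{n(n+1)}{2}+\frac{p(p+1)}{2}-\frac{(\alpha-n)^{+}((\alpha-n)^{+}+1)}{2}-\frac{(\alpha-p)^{+}((\alpha-p)^{+}+1)}{2}+1$ integers, which is exactly the stated right-hand side in each case, and with the lower bound this forces equality.

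The step that needs care is the gluing. A single split $\Sigma_{i}(A^{+})+\Sigma_{\alpha-i}(A^{-})\subseteq\Sigma_{\alpha}(A)$ with $i+(\alpha-i)=\alpha$ loses roughly $\frac{\alpha^{2}}{4}$ from the target, because the length constraint is paid for on both sides at once. The remedy is to observe that the shifted set $\sigma^{+}+\Sigma_{(\alpha-p)^{+}}(A^{-})$, built from all positive elements and few negative ones, protrudes beyond $\Sigma_{0}(A^{+})+\Sigma_{\alpha}(A^{-})$ by exactly $\frac{\alpha(\alpha+1)}{2}-\frac{(\alpha-p)^{+}((\alpha-p)^{+}+1)}{2}$ distinct values; this count comes from applying the already-established Theorem~\ref{emptyset-thm} to the $\alpha$ negatives of least absolute value. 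Keeping this bookkeeping consistent through the overlapping cases (i)--(iv), and checking that each side of every sumset inclusion is nonempty, is the only genuine subtlety.
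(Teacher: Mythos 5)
Your proposal is correct and follows essentially the same route as the paper: the same split of $A$ into its negative and positive parts, the same two constituent sets (a "spread" piece like $\Sigma_0(A^{+})+\Sigma_\alpha(A^{-})$ glued to a shifted piece $\sigma^{+}+\Sigma_{(\alpha-p)^{+}}(A^{-})$ that overlaps in at most one point, and in case (iv) the two one-point-overlapping translates), with Theorem~\ref{emptyset-thm} and Theorem~\ref{sumset-thm} supplying the counts and $A=[-n,p]\setminus\{0\}$ witnessing optimality. The only cosmetic differences are that you unify cases (i) and (ii) via $\beta=(\alpha-p)^{+}$ and phrase the gluing as counting values protruding beyond $\max V$ rather than writing the union/intersection explicitly.
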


\begin{proof}
	Let $A = A_{n} \cup A_{p}$, where $A_{n} = \{b_{1},\ldots,b_{n}\}$ and $A_{p} = \{c_{1},\ldots,c_{p}\}$ such that $b_{n}<b_{n-1}<\cdots<b_{1}<0<c_{1}<c_{2}<\cdots<c_{p}$.
	
	\noindent $\mathtt{(i)}$ If $\alpha \leq n$ and $\alpha \leq p$, then
	\[(\Sigma_{\alpha} (A_{n})+\Sigma_{0} (A_{p})) 
	\cup \left(\Sigma^{1} (\{b_{1}, \ldots, b_{\alpha}\})
	+\sum_{j=1}^{p}c_{j}\right) \subset \Sigma_{\alpha} (A)\]
	with 
	\[(\Sigma_{\alpha} (A_{n})+\Sigma_{0} (A_{p})) \cap \left(\Sigma^{1} (\{b_{1}, \ldots, b_{\alpha}\})+\sum_{j=1}^{p}c_{j}\right) = \emptyset.\]
	Hence, by Theorem \ref{sumset-thm} and Theorem \ref{emptyset-thm}, we have
	\begin{align*}
		|\Sigma_{\alpha} (A)|
		&\geq |\Sigma_{\alpha} (A_{n})+\Sigma_{0} (A_{p})|+|\Sigma^{1} (\{b_{1}, \ldots, b_{\alpha}\})|\\
		&\geq |\Sigma_{\alpha} (A_{n})|+|\Sigma_{0} (A_{p})|+|\Sigma^{1} (\{b_{1}, \ldots, b_{\alpha}\})|-1\\
		&\geq \left(\frac{n(n+1)}{2}-\frac{\alpha(\alpha+1)}{2}+1\right)+\left(\frac{p(p+1)}{2}+1\right)
		+\frac{\alpha(\alpha+1)}{2}-1\\
		&= \frac{n(n+1)}{2}+\frac{p(p+1)}{2}+1.
	\end{align*}
	
	\noindent $\mathtt{(ii)}$ If $\alpha \leq n$ and $\alpha > p$, then
	\[(\Sigma_{\alpha} (A_{n})+\Sigma_{0} (A_{p})) 
	\cup \left(\Sigma_{\alpha-p} (\{b_{1}, \ldots, b_{\alpha}\})
	+\sum_{j=1}^{p}c_{j}\right) \subset \Sigma_{\alpha} (A)\]
	with 
	\[(\Sigma_{\alpha} (A_{n})+\Sigma_{0} (A_{p})) \cap \left(\Sigma_{\alpha-p} (\{b_{1}, \ldots, b_{\alpha}\})
	+\sum_{j=1}^{p}c_{j}\right) = \left\{\sum_{j=1}^{\alpha}b_{j}+\sum_{j=1}^{p} c_{j}\right\}.\] 
	Hence, by Theorem \ref{sumset-thm} and Theorem \ref{emptyset-thm}, we have
	\begin{align*}
		|\Sigma_{\alpha} (A)|
		&\geq |\Sigma_{\alpha} (A_{n})+\Sigma_{0} (A_{p})|+|\Sigma_{\alpha-p} (\{b_{1}, \ldots, b_{\alpha}\})|-1\\
		&\geq |\Sigma_{\alpha} (A_{n})|+|\Sigma_{0} (A_{p})|+|\Sigma_{\alpha-p} (\{b_{1}, \ldots, b_{\alpha}\})|-2\\
		&\geq \left(\frac{n(n+1)}{2}-\frac{\alpha(\alpha+1)}{2}+1\right)
		+\left(\frac{p(p+1)}{2}+1\right)\\
		&\quad+ \left(\frac{\alpha(\alpha+1)}{2}-\frac{(\alpha-p)(\alpha-p+1)}{2}+1\right)-2\\
		&= \frac{n(n+1)}{2}+\frac{p(p+1)}{2}
		-\frac{(\alpha-p)(\alpha-p+1)}{2}+1.
	\end{align*}
	
	\noindent $\mathtt{(iii)}$ If $\alpha > n$ and $\alpha \leq p$, then by applying the result of (ii) for $(-A)$, we obtain 
	\[|\Sigma_{\alpha} (A)| = |\Sigma_{\alpha} (-A)| \geq \frac{n(n+1)}{2}+\frac{p(p+1)}{2}-\frac{(\alpha-n)(\alpha-n+1)}{2}+1.\]
	
	\noindent $\mathtt{(iv)}$ If $\alpha > n$ and $\alpha > p$, then
	\[\left(\sum_{j=1}^{n} b_{j}+\Sigma_{\alpha-n} (A_{p})\right) \cup \left(\Sigma_{\alpha-p} (A_{n})+\sum_{j=1}^{p} c_{j}\right) \subset \Sigma_{\alpha} (A)\]
	with 
	\[\left(\sum_{j=1}^{n} b_{j}+\Sigma_{\alpha-n} (A_{p})\right) \cap \left(\Sigma_{\alpha-p} (A_{n})+\sum_{j=1}^{p} c_{j}\right) = \left\{\sum_{j=1}^{n}b_{j}+\sum_{j=1}^{p} c_{j}\right\}.\] 
	Hence, by Theorem \ref{emptyset-thm}, we get
	\begin{align*}
		|\Sigma_{\alpha} (A)|
		&\geq |\Sigma_{\alpha-n} (A_{p})|+|\Sigma_{\alpha-p} (A_{n})|-1\\
		&\geq \left(\frac{p(p+1)}{2}-\frac{(\alpha-n)(\alpha-n+1)}{2}+1\right) \\
		&\quad+ \left(\frac{n(n+1)}{2}-\frac{(\alpha-p)(\alpha-p+1)}{2}+1\right)-1\\
		&= \frac{n(n+1)}{2}+\frac{p(p+1)}{2}
		-\frac{(\alpha-n)(\alpha-n+1)}{2}-\frac{(\alpha-p)(\alpha-p+1)}{2}+1.
	\end{align*}
	
	It can be easily verified that all the lower bounds mentioned in the theorem are optimal for $A=[-n, p]\setminus\{0\}$.
\end{proof}

\begin{corollary}\label{improved-subset-cor}
	Let $n$ and $p$ be positive integers and $A$ be a set of $n$ negative integers, $p$ positive integers and zero. Let $\alpha \in [0, n+p+1]$ be an integer. 
	\begin{enumerate}[label=\upshape(\roman*)]
		\item If $\alpha \leq n$ and $\alpha \leq p$, then $|\Sigma_{\alpha} (A)| \geq \frac{n(n+1)}{2}+\frac{p(p+1)}{2}+1$.
		
		\item If $\alpha \leq n$ and $\alpha > p$, then $|\Sigma_{\alpha} (A)| \geq \frac{n(n+1)}{2}+\frac{p(p+1)}{2}-\frac{(\alpha-p)(\alpha-p-1)}{2}+1$.
		
		\item If $\alpha > n$ and $\alpha \leq p$, then $|\Sigma_{\alpha} (A)| \geq \frac{n(n+1)}{2}+\frac{p(p+1)}{2}-\frac{(\alpha-n)(\alpha-n-1)}{2}+1$.
		
		\item If $\alpha > n$ and $\alpha > p$, then $|\Sigma_{\alpha} (A)| \geq \frac{n(n+1)}{2}+\frac{p(p+1)}{2}
		-\frac{(\alpha-n)(\alpha-n-1)}{2}-\frac{(\alpha-p)(\alpha-p-1)}{2}+1$.
	\end{enumerate}
	These lower bounds are optimal.
\end{corollary}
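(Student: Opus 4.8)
The plan is to deduce the corollary from Theorem~\ref{improved-subset-thm} by deleting the element $0$, in exact analogy with the way Corollary~\ref{zero-cor} is obtained from Theorem~\ref{emptyset-thm}. Put $A' = A \setminus \{0\}$, a set of $n$ negative and $p$ positive integers. Since $0$ contributes nothing to a subset sum, one has $\Sigma_0(A) = \Sigma_0(A')$; and for $\alpha \geq 1$, a subset of $A$ of size at least $\alpha$ is either a subset of $A'$ of size at least $\alpha$, or, after deleting $0$, a subset of $A'$ of size at least $\alpha - 1$, while conversely every subset of $A'$ of size at least $\alpha - 1$ arises in this way (adjoin $0$ if its size is only $\alpha-1$). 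Hence $\Sigma_\alpha(A) = \Sigma_{\alpha - 1}(A')$, and so $|\Sigma_\alpha(A)| = |\Sigma_{\alpha - 1}(A')|$ for all $\alpha \in [1, n+p+1]$.

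Next I would apply Theorem~\ref{improved-subset-thm} to $A'$. For $\alpha = 0$, case~(i) of that theorem (with parameter $0$, its hypotheses holding trivially) gives $|\Sigma_0(A)| = |\Sigma_0(A')| \geq \frac{n(n+1)}{2} + \frac{p(p+1)}{2} + 1$, which is the bound in (i). For $\alpha \geq 1$, set $\beta = \alpha - 1 \in [0, n+p]$ and feed $\beta$ into Theorem~\ref{improved-subset-thm}, using the case determined by how $\beta$ compares with $n$ and $p$; substituting $\beta = \alpha - 1$ back into each of the four displayed bounds and simplifying the quadratic terms reproduces exactly the four bounds asserted here (for instance $\frac{\beta(\beta+1)}{2} - \frac{(\beta-p)(\beta-p+1)}{2}$ becomes $-\frac{(\alpha-p)(\alpha-p-1)}{2}$, matching (ii)). The translation of the hypotheses is routine: $\alpha \leq n$, $\alpha > p$ becomes $\beta \leq n$, $\beta \geq p$, and similarly in the remaining cases.

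The one point requiring care --- and the main obstacle --- is the boundary values $\beta \in \{n,p\}$, where the applicable case of Theorem~\ref{improved-subset-thm} is a ``lower'' one than the naive translation suggests (e.g.\ $\alpha > p$ only forces $\beta \geq p$, and at $\beta = p$ with $\beta \le n$ one is in case~(i), not case~(ii), of the theorem). One checks that in each such instance the corresponding quadratic correction term in the target formula vanishes, so the two expressions agree anyway. All of this is mere index bookkeeping; no new idea is needed beyond the reduction.

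For optimality, take $A = [-n, p]$, which has $n$ negative elements, $p$ positive elements and $0$. Writing $m_p = \max\{0, \alpha - p - 1\}$ and $m_n = \max\{0, \alpha - n - 1\}$, a subset of $A$ of size at least $\alpha$ has sum at most $\frac{p(p+1)}{2} - \frac{m_p(m_p+1)}{2}$ (once $\{0,1,\ldots,p\}$ is used, any further required terms are best taken among the least negative elements) and at least $-\frac{n(n+1)}{2} + \frac{m_n(m_n+1)}{2}$. Hence $\Sigma_\alpha(A)$ is contained in an interval of integers whose cardinality, evaluated in each of the four cases, is precisely the lower bound claimed; combined with that lower bound this forces equality, so all four bounds are optimal.
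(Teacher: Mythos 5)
Your proposal is correct and follows exactly the paper's route: delete $0$ to get $A'=A\setminus\{0\}$, use $\Sigma_0(A)=\Sigma_0(A')$ and $\Sigma_\alpha(A)=\Sigma_{\alpha-1}(A')$ to invoke Theorem~\ref{improved-subset-thm} at parameter $\alpha-1$, and verify optimality with $A=[-n,p]$. Your explicit treatment of the boundary values $\alpha-1\in\{n,p\}$ (where a different case of the theorem applies but the vanishing correction term makes the bounds agree) is a detail the paper glosses over, and it is handled correctly.
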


\begin{proof}
	The lower bounds for $|\Sigma_{\alpha} (A)|$ easily follows from Theorem \ref{improved-subset-thm} and the fact that $\Sigma_{0} (A) = \Sigma_{0} (A^\prime)$ and $\Sigma_{\alpha} (A) = \Sigma_{\alpha-1} (A^\prime)$ for $\alpha \geq 1$, where $A^\prime = A\setminus\{0\}$. Furthermore, the optimality of these bounds can be verified by taking $A=[-n, p]$.
\end{proof}

\begin{corollary}\label{general-subset-cor}
	Let $k \geq 2$ and $A$ be a set of $k$ integers. Let $\alpha \in [0, k]$ be an integer. If $0\notin A$, then
	\begin{equation}\label{general-subset-eqn1}
		|\Sigma_{\alpha} (A)| \geq \left\lfloor \frac{(k+1)^{2}}{4} \right\rfloor-\frac{\alpha(\alpha+1)}{2}+1.
	\end{equation}
	If $0\in A$, then
	\begin{equation}\label{general-subset-eqn2}
		|\Sigma_{\alpha} (A)| \geq \left\lfloor \frac{k^{2}}{4} \right\rfloor-\frac{\alpha(\alpha-1)}{2}+1.
	\end{equation}
\end{corollary}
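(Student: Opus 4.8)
The plan is to deduce both inequalities from Theorem~\ref{improved-subset-thm}, with Theorem~\ref{emptyset-thm} handling the one-sign case, by minimising the bounds there over all ways $A$ can split into its positive and its negative part, and then to reduce the case $0\in A$ to the case $0\notin A$. The computational backbone will be the elementary identity
\[
\frac{m(m+1)}{2}-\frac{(\alpha-m)(\alpha-m+1)}{2}=\frac{(\alpha+1)(2m-\alpha)}{2},
\]
valid for all integers $m$ and $\alpha$, which collapses each of the four cases of Theorem~\ref{improved-subset-thm} into an expression that is either independent of the split or monotone in it.

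Suppose first that $0\notin A$. If all elements of $A$ have the same sign then $A\cap(-A)=\emptyset$, and Theorem~\ref{emptyset-thm} gives $|\Sigma_{\alpha}(A)|\geq\frac{k(k+1)}{2}-\frac{\alpha(\alpha+1)}{2}+1$; since $\frac{k(k+1)}{2}\geq\frac{(k+1)^{2}}{4}\geq\lfloor (k+1)^{2}/4\rfloor$ for $k\geq 1$, (\ref{general-subset-eqn1}) follows. Otherwise write $k=n+p$ with $n\geq 1$ negative and $p\geq 1$ positive elements and apply Theorem~\ref{improved-subset-thm}. In case~(i) the bound is $\frac{n(n+1)}{2}+\frac{p(p+1)}{2}+1$, so (\ref{general-subset-eqn1}) reduces to $\frac{n(n+1)}{2}+\frac{p(p+1)}{2}\geq\lfloor (k+1)^{2}/4\rfloor$, which I would obtain by writing the left side as $\frac{n^{2}+p^{2}+k}{2}$, bounding $n^{2}+p^{2}\geq\lceil k^{2}/2\rceil$ for nonnegative integers summing to $k$, and checking the two parities of $k$. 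Applying the identity above to the $p$-terms turns case~(ii) into the inequality $\frac{n(n+1)}{2}+p(\alpha+1)\geq\lfloor (k+1)^{2}/4\rfloor$; since the left side increases with $\alpha$ and here $\alpha\geq p+1$, it suffices to treat $\alpha=p+1$, i.e.\ to prove $\frac{(k-p)(k-p+1)}{2}+p(p+2)\geq\lfloor (k+1)^{2}/4\rfloor$, and this follows because the left side is a convex function of $p$ whose global minimum $\frac{k^{2}}{3}+k-\frac{3}{8}$ already exceeds $\frac{(k+1)^{2}}{4}$ for $k\geq 2$. Case~(iii) is the mirror image of case~(ii) under $A\mapsto -A$ (the bound in (\ref{general-subset-eqn1}) is symmetric in $n$ and $p$). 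Finally, in case~(iv) the identity makes the bound collapse to the split-free quantity $(\alpha+1)(k-\alpha)+1$, so (\ref{general-subset-eqn1}) reduces to $\frac{(\alpha+1)(2k-\alpha)}{2}\geq\lfloor (k+1)^{2}/4\rfloor$; the hypotheses $\alpha>n$ and $\alpha>p$ force $\alpha>k/2$, and since the left side is concave in $\alpha$ it suffices to verify this at $\alpha=\lfloor k/2\rfloor+1$ and at $\alpha=k$, both of which are routine quadratic estimates.

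For the case $0\in A$, set $A'=A\setminus\{0\}$, so that $0\notin A'$, $|A'|=k-1$, and $\Sigma_{0}(A)=\Sigma_{0}(A')$ while $\Sigma_{\alpha}(A)=\Sigma_{\alpha-1}(A')$ for $\alpha\geq 1$. Applying (\ref{general-subset-eqn1}) to $A'$ --- whose proof above is valid as soon as $|A'|\geq 1$ --- with parameter $\max\{\alpha-1,0\}$, and simplifying $\lfloor((k-1)+1)^{2}/4\rfloor=\lfloor k^{2}/4\rfloor$, yields exactly (\ref{general-subset-eqn2}). The only real work is the handful of one-variable inequalities above; the main obstacle is the bookkeeping needed to see that, once the collapsing identity has been applied, each of the four cases of Theorem~\ref{improved-subset-thm} reduces to a convexity or concavity statement whose extreme value is attained at an endpoint (or at the vertex of the relevant parabola), after which every remaining step is a direct computation.
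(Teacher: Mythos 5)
Your proof is correct and rests on the same skeleton as the paper's: deduce (\ref{general-subset-eqn1}) from Theorem~\ref{emptyset-thm} when $A$ is one-signed and from Theorem~\ref{improved-subset-thm} otherwise, minimizing over the split $k=n+p$, and obtain (\ref{general-subset-eqn2}) by deleting $0$ and applying (\ref{general-subset-eqn1}) to $A\setminus\{0\}$ with parameter $\alpha-1$, using $\lfloor ((k-1)+1)^{2}/4\rfloor=\lfloor k^{2}/4\rfloor$. The difference is in the middle step. You keep the four cases of Theorem~\ref{improved-subset-thm} separate, collapse each with the identity $\frac{m(m+1)}{2}-\frac{(\alpha-m)(\alpha-m+1)}{2}=\frac{(\alpha+1)(2m-\alpha)}{2}$, and close each with its own convexity or concavity argument; I have checked these (the reduction of case (ii) to $\frac{n(n+1)}{2}+p(\alpha+1)$, the vertex value $\frac{k^{2}}{3}+k-\frac{3}{8}$, the collapse of case (iv) to $\frac{(\alpha+1)(2k-\alpha)}{2}$, and the endpoint checks at $\alpha=\lfloor k/2\rfloor+1$ and $\alpha=k$) and they all go through. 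The paper instead makes the case distinction disappear at the outset: every one of the four bounds in Theorem~\ref{improved-subset-thm} is at least $\frac{n(n+1)}{2}+\frac{p(p+1)}{2}-\frac{\alpha(\alpha+1)}{2}+1$, which follows from $\binom{x+1}{2}+\binom{y+1}{2}=\binom{x+y+1}{2}-xy\leq\binom{\alpha+1}{2}$ applied with $x=\alpha-n$, $y=\alpha-p$ and $x+y\leq\alpha$ (or the single-term version $\binom{\alpha-p+1}{2}\leq\binom{\alpha+1}{2}$ in cases (ii)--(iii)); after that, one completion of the square, $\frac{n(n+1)}{2}+\frac{(k-n)(k-n+1)}{2}=\left(n-\frac{k}{2}\right)^{2}+\frac{(k+1)^{2}-1}{4}$, together with integrality of $|\Sigma_{\alpha}(A)|$, yields the floor. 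The two arguments use the same ingredients and land on the same bound; the paper's uniform weakening simply replaces your four separate optimizations with a single one, and you could shorten your write-up considerably by adopting it.
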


\begin{proof}
	\noindent $\mathtt{Case~1}$. ($0 \notin A$). If $k=2$, then $A=\{a_{1}, a_{2}\}$ for some integers $a_{1}, a_{2}$ with $a_{1}<a_{2}$. Therefore $\Sigma_{1} (A) = \{a_{1}, a_{2}, a_{1}+a_{2}\} \subset \Sigma_{0} (A)$ and $\Sigma_{2} (A)=\{a_{1}+a_{2}\}$. Hence (\ref{general-subset-eqn1}) holds for $k=2$. So, assume that $k\geq 3$. As $k(k+1)/2 > (k+1)^{2}/4$ for $k\geq 3$, if $|\Sigma_{\alpha} (A)| \geq k(k+1)/2-\alpha(\alpha+1)/2+1$, then we are done. So, let $|\Sigma_{\alpha} (A)| < k(k+1)/2-\alpha(\alpha+1)/2+1$. Then, Theorem \ref{emptyset-thm} implies that $A$ contains both positive and negative integers. Let $A_n$ and $A_p$ be subsets of $A$ that contain respectively, all negative and all positive integers of $A$. Let also $|A_n| = n$ and $|A_p| = p$. Then $n\geq 1$ and $p \geq 1$. By Theorem \ref{improved-subset-thm}, we have 
	\[|\Sigma_{\alpha} (A)| \geq  \frac{n(n+1)}{2}+\frac{p(p+1)}{2}-\frac{\alpha(\alpha+1)}{2}+1\]
	for all $\alpha \in [0, k]$. Since $k=n+p$, without loss of generality we may assume that $n\geq k/2$. Therefore
	\begin{align*}
		|\Sigma_{\alpha} (A)|
		&\geq \frac{n(n+1)}{2}+\frac{(k-n)(k-n+1)}{2}-\frac{\alpha(\alpha+1)}{2}+1\\
		&= \left(n-\frac{k}{2}\right)^{2}+\frac{(k+1)^2-1}{4}-\frac{\alpha(\alpha+1)}{2} + 1\\
		&\geq \frac{(k+1)^2-1}{4}-\frac{\alpha(\alpha+1)}{2}+1.
	\end{align*}
	Hence
	\[|\Sigma_{\alpha} (A)| \geq \left\lfloor \frac{(k+1)^{2}}{4} \right\rfloor-\frac{\alpha(\alpha+1)}{2}+1.\]
	
	\noindent $\mathtt{Case~2}$. ($0 \in A$). Set $A^{\prime}=A\setminus \{0\}$. Then $\Sigma_{0}(A) = \Sigma_{0}(A^{\prime})$ and $\Sigma_{\alpha}(A) = \Sigma_{\alpha-1}(A^{\prime})$ for $\alpha\geq 1$. Hence, by Case 1, we get
	\begin{equation*}
		|\Sigma_{0}(A)| 
		= |\Sigma_{0}(A^{\prime})| 
		\geq \left\lfloor \frac{k^{2}}{4} \right\rfloor+1
	\end{equation*}
	and
	\begin{align*}
		|\Sigma_{\alpha} (A)|
		= |\Sigma_{\alpha-1} (A^{\prime})|
		\geq \left\lfloor \frac{k^{2}}{4} \right\rfloor
		-\frac{\alpha(\alpha-1)}{2}+1
	\end{align*}
	for $\alpha \geq 1$. Hence
	\[|\Sigma_{\alpha} (A)| \geq \left\lfloor \frac{k^{2}}{4} \right\rfloor-\frac{\alpha(\alpha-1)}{2}+1\]
	for all $\alpha \in [0, k]$. This completes the proof of the corollary.
\end{proof}

\begin{remark}
	Nathanson \cite{nathu95} have already proved this corollary for $\alpha=1$. The purpose of this corollary is to prove a similar result for every $\alpha \in [0, k]$. Note that the lower bounds in Corollary \ref{general-subset-cor} are not optimal for all $\alpha \in [0, k]$, except for $\alpha=0$ and $\alpha=1$.
\end{remark}

\begin{remark}
	The lower bounds in Corollary \ref{general-subset-cor} can also be written in the following form:\\	
	If $0 \notin A$, then
	\begin{equation*}
		|\Sigma_{\alpha} (A)| \geq 
		\begin{cases}
			\frac{(k+1)^2}{4}-\frac{\alpha(\alpha+1)}{2}+1 & \text{if } k \equiv 1 \pmod{2}\\
			\frac{(k+1)^2-1}{4}-\frac{\alpha(\alpha+1)}{2}+1 & \text{if } k \equiv 0 \pmod{2}.
		\end{cases}
	\end{equation*}
	If $0\in A$, then
	\begin{equation*}
		|\Sigma_{\alpha} (A)| \geq
		\begin{cases}
			\frac{k^2-1}{4}-\frac{\alpha(\alpha-1)}{2}+1 & \text{ if } k \equiv 1 \pmod{2}\\
			\frac{k^2}{4}-\frac{\alpha(\alpha-1)}{2}+1 & \text{ if } k \equiv 0 \pmod{2}.
		\end{cases}
	\end{equation*}
\end{remark}

\section{Subsequence sum}\label{S3}

In this section, we extend the results of the previous section from sets of integers to sequences of integers. In Theorem \ref{restriction-subsequence-thm}, we establish optimal lower bound for $|\Sigma_{\alpha} (\mathcal{A})|$ under the assumptions $\mathcal{A}\cap (-\mathcal{A})=\emptyset$ and $\mathcal{A}\cap (-\mathcal{A})=(0)_{r}$. In Theorem \ref{improved-subsequence-thm} and Corollary \ref{improved-subsequence-cor}, we prove optimal lower bound for $|\Sigma_{\alpha} (\mathcal{A})|$ for arbitrary finite sequences of integers $\mathcal{A}$. The bounds in Theorem \ref{improved-subsequence-thm} and Corollary \ref{improved-subsequence-cor} depends on the number of negative terms and the number of positive terms in sequence $\mathcal{A}$. In Corollary \ref{general-subsequence-cor}, we prove lower bounds for $|\Sigma_{\alpha} (\mathcal{A})|$, which holds for arbitrary finite sequences of integers $\mathcal{A}$ and only depend on the total number of terms of $\mathcal{A}$ not the number of positive and negative terms of $\mathcal{A}$.

If $\alpha=rk$ and $\mathcal{A}=(a_{1}, \ldots, a_{k})_{r}$, then $\Sigma_{\alpha} (\mathcal{A})=\{ra_{1}+ra_{2}+\cdots+ra_{k}\}$. Therefore  $|\Sigma_{\alpha} (\mathcal{A})|=1$. So, in the rest of this section, we assume that $\alpha < rk$.

\begin{theorem}\label{restriction-subsequence-thm}
	Let $k\geq 2$, $r\geq 1$, and $\alpha\in[0,rk-1]$ be integers. Let $m\in [1,k]$ be an integer such that $(m-1)r\leq \alpha<mr$. Let $\mathcal{A}$ be a sequence of $rk$ terms which is made up of $k$ distinct integers each repeated exactly $r$ times. If $\mathcal{A}\cap(-\mathcal{A})=\emptyset$, then
	\begin{equation}\label{moregeneral-subsequence-eqn1}
		|\Sigma_{\alpha} (\mathcal{A})| \geq r\left(\frac{k(k+1)}{2}-\frac{m(m+1)}{2}\right)+m(mr-\alpha)+1.
	\end{equation}
	If $\mathcal{A}\cap(-\mathcal{A})=(0)_{r}$, then
	\begin{equation}\label{moregeneral-subsequence-eqn2}
		|\Sigma_{\alpha} (\mathcal{A})| 
		\geq r\left(\frac{k(k-1)}{2}-\frac{m(m-1)}{2}\right)
		+(m-1)(mr-\alpha)+1.
	\end{equation}
	These lower bounds are optimal. 
\end{theorem}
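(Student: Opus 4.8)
The plan is to derive both inequalities from the set‑case results of Section~\ref{S2} (Theorem~\ref{emptyset-thm}) together with the elementary sumset bound of Theorem~\ref{sumset-thm}, via a ``copy‑splitting'' argument; no further passage to $\mathbb{F}_{p}$ is needed beyond what is already used in the proof of Theorem~\ref{emptyset-thm}. The whole point will be to choose the right way of distributing the length budget $\alpha$ among the $r$ copies of the underlying set.

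First I would prove (\ref{moregeneral-subsequence-eqn1}). Write $A=\{a_{1},\dots,a_{k}\}$ for the underlying set, so $A\cap(-A)=\emptyset$, and realise $\mathcal{A}$ as a disjoint union $\mathcal{A}_{1}\sqcup\cdots\sqcup\mathcal{A}_{r}$ of $r$ copies of $A$, each $\mathcal{A}_{j}$ being a set equal to $A$. Put $t:=\alpha-(m-1)r$, so $0\le t\le r-1$, and split the length budget as $\alpha=\underbrace{m+\cdots+m}_{t}+\underbrace{(m-1)+\cdots+(m-1)}_{r-t}$. If we choose from each $\mathcal{A}_{j}$ a subset of size at least $b_{j}$ with $\sum_{j}b_{j}=\alpha$, the union is a subsequence of $\mathcal{A}$ of length at least $\alpha$ (an element chosen in $\ell\le r$ of the copies appears $\ell$ times, which is legal), so
\[
\underbrace{\Sigma_{m}(A)+\cdots+\Sigma_{m}(A)}_{t}+\underbrace{\Sigma_{m-1}(A)+\cdots+\Sigma_{m-1}(A)}_{r-t}\ \subseteq\ \Sigma_{\alpha}(\mathcal{A}).
\]
Iterating Theorem~\ref{sumset-thm} on the left and then applying Theorem~\ref{emptyset-thm} to $|\Sigma_{m}(A)|$ and $|\Sigma_{m-1}(A)|$ gives
\[
|\Sigma_{\alpha}(\mathcal{A})|\ \ge\ t\,|\Sigma_{m}(A)|+(r-t)\,|\Sigma_{m-1}(A)|-(r-1),
\]
and a short computation, substituting $\alpha=(m-1)r+t$, simplifies the right‑hand side to exactly $r\bigl(\tfrac{k(k+1)}{2}-\tfrac{m(m+1)}{2}\bigr)+m(mr-\alpha)+1$. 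The edge cases $m=1$ (where $\Sigma_{0}(A)$ occurs), $m=k$ (where $\Sigma_{k}(A)$ is a singleton) and $t=0$ all go through, since Theorem~\ref{sumset-thm} only requires the summand sets to be nonempty.

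For (\ref{moregeneral-subsequence-eqn2}), set $A'=A\setminus\{0\}$, so $|A'|=k-1$ and $A'\cap(-A')=\emptyset$, and $\mathcal{A}=(0)_{r}\sqcup\mathcal{A}'$ with $\mathcal{A}'=(A')_{r}$. Adjoining any number $j\in[0,r]$ of the available zeros does not change a subsequence sum but raises the length by $j$, so one checks directly that $\Sigma_{\alpha}(\mathcal{A})=\Sigma_{\alpha'}(\mathcal{A}')$ with $\alpha'=\max\{0,\alpha-r\}$ (the value $\alpha'$ lies in $[0,r(k-1)-1]$ because $\alpha<rk$). Applying (\ref{moregeneral-subsequence-eqn1}) to $\mathcal{A}'$ with parameters $k-1$, $\alpha'$, and $m'=\max\{1,m-1\}$ — for which indeed $(m'-1)r\le\alpha'<m'r$ — and using the identity $(m-1)r-(\alpha-r)=mr-\alpha$, the resulting bound collapses to the right‑hand side of (\ref{moregeneral-subsequence-eqn2}).

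Finally, optimality is witnessed by $\mathcal{A}=[1,k]_{r}$ for (\ref{moregeneral-subsequence-eqn1}) and by $\mathcal{A}=[0,k-1]_{r}$ for (\ref{moregeneral-subsequence-eqn2}): in each case $\mathcal{A}\cap(-\mathcal{A})$ has the required shape, and every subsequence sum of length at least $\alpha$ lies in the integer interval between its minimum (put full multiplicity $r$ on the smallest terms, after first using all available zeros in the second case) and its maximum $\sum_{a\in\mathcal{A}}a$; the length of that interval is precisely the claimed lower bound, so $\Sigma_{\alpha}(\mathcal{A})$ cannot be larger. I expect the only genuine work to be the bookkeeping in picking the split of $\alpha$ into $t$ copies of $m$ and $r-t$ copies of $m-1$, together with the attendant arithmetic simplifications; once the split is fixed, every step is immediate from the results already proved.
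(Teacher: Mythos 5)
Your proposal is correct and follows essentially the same route as the paper: the key inclusion $t\,\Sigma_{m}(A)+(r-t)\,\Sigma_{m-1}(A)\subseteq\Sigma_{\alpha}(\mathcal{A})$ with $t=\alpha-(m-1)r$ is exactly the paper's decomposition (written there as $u$-fold and $(r-u)$-fold sumsets and bounded via Theorems \ref{sumset-thm} and \ref{multifold-sumset-thm}), and the optimality examples coincide. The only difference is in (\ref{moregeneral-subsequence-eqn2}), where you strip the zeros at the sequence level and reduce to (\ref{moregeneral-subsequence-eqn1}) with parameters $k-1,\ \alpha'=\max\{0,\alpha-r\},\ m'=\max\{1,m-1\}$, whereas the paper keeps the same decomposition and invokes Corollary \ref{zero-cor}; these are the same idea applied at different levels (note only that for $k=2$ your reduction lands at $k-1=1$, outside the theorem's stated hypothesis, though the bound is trivially verified there).
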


\begin{proof}
	Let $A$ be the set of all distinct terms of sequence $\mathcal{A}$. Since $(m-1)r\leq \alpha<mr$, we can write $\alpha$ as $\alpha=(m-1)r+u$ for some integer $0 \leq u < r$. Then
	\[(r-u)\Sigma_{m-1} (A) + u\Sigma_{m} (A) 
	\subset \Sigma_{\alpha} (\mathcal{A}),\]
	where $(r-u)\Sigma_{m-1} (A)$ is the $(r-u)$-fold sumset of $\Sigma_{m-1} (A)$ and $u\Sigma_{m} (A)$ is the $u$-fold sumset of $\Sigma_{m} (A)$. So, by Theorem \ref{sumset-thm} and Theorem \ref{multifold-sumset-thm}, we have
	\begin{align*}
		|\Sigma_{\alpha} (\mathcal{A})| 
		\geq |(r-u)\Sigma_{m-1} (A)|+|u\Sigma_{m} (A)|-1
		\geq (r-u)|\Sigma_{m-1} (A)|+u|\Sigma_{m} (A)|-r+1.
	\end{align*}
	If $\mathcal{A}\cap(-\mathcal{A})=\emptyset$, then $A\cap(-A)=\emptyset$. Thus, by Theorem \ref{emptyset-thm}, we have
	\begin{align*}
		|\Sigma_{\alpha} (\mathcal{A})|
		&\geq (r-u)\left(\frac{k(k+1)}{2}-\frac{m(m-1)}{2}+1\right)
		+u\left(\frac{k(k+1)}{2}-\frac{m(m+1)}{2}+1\right)
		-r+1\\
		&= r\left(\frac{k(k+1)}{2}-\frac{m(m+1)}{2}\right)
		+m(r-u)+1\\
		&= r\left(\frac{k(k+1)}{2}-\frac{m(m+1)}{2}\right)
		+m(mr-\alpha)+1.
	\end{align*}
	Similarly, if $\mathcal{A}\cap(-\mathcal{A})=(0)_{r}$, then $A\cap(-A)=\{0\}$. Thus, by Corollary \ref{zero-cor}, we have
	\begin{align*}
		|\Sigma_{\alpha} (\mathcal{A})|
		&\geq (r-u)\left(\frac{k(k-1)}{2}-\frac{(m-1)(m-2)}{2}+1\right)+u\left(\frac{k(k-1)}{2}-\frac{m(m-1)}{2}+1\right)\\
		&\quad -r+1\\
		&= r\left(\frac{k(k-1)}{2}-\frac{m(m-1)}{2}\right)
		+(m-1)(r-u)+1\\
		&= r\left(\frac{k(k-1)}{2}-\frac{m(m-1)}{2}\right)
		+(m-1)(mr-\alpha)+1.
	\end{align*}
	Hence (\ref{moregeneral-subsequence-eqn1}) and (\ref{moregeneral-subsequence-eqn2}) are established.
	
	Next, to verify that the lower bounds in (\ref{moregeneral-subsequence-eqn1}) and (\ref{moregeneral-subsequence-eqn2}) are optimal, let $\mathcal{A}=[1, k]_{r}$ and $\mathcal{B}=[0, k-1]_{r}$. Then $\mathcal{A}\cap (-\mathcal{A})=\emptyset$ and $\mathcal{B}\cap (-\mathcal{B})=(0)_{r}$ with
	\begin{align*}
		\Sigma_{\alpha} (\mathcal{A}) \subset \left[r\cdot1+\cdots+r\cdot(m-1)+(\alpha-(m-1)r)\cdot m, r\cdot1+\cdots+r\cdot k\right]
	\end{align*}
	and 
	\begin{align*}
		\Sigma_{\alpha} (\mathcal{B}) \subset \left[r\cdot1+\cdots+r\cdot(m-2)+(\alpha-(m-1)r)\cdot (m-1), r\cdot1+\cdots+r\cdot (k-1)\right].
	\end{align*}
	Therefore
	\[|\Sigma_{\alpha} (\mathcal{A})| \leq \frac{rk(k+1)}{2}-\frac{rm(m+1)}{2}+m(mr-\alpha)+1\]
	and
	\[|\Sigma_{\alpha} (\mathcal{B})| \leq \frac{rk(k-1)}{2}-\frac{rm(m-1)}{2}+(m-1)(mr-\alpha)+1.\]
	These two inequalities together with (\ref{moregeneral-subsequence-eqn1}) and (\ref{moregeneral-subsequence-eqn2}) implies that the lower bounds in (\ref{moregeneral-subsequence-eqn1}) and (\ref{moregeneral-subsequence-eqn2}) are optimal. This completes the proof of the theorem.
\end{proof}

\begin{remark}
	Mistri and Pandey's result \cite[Theorem 1]{mistri16} is a particular case of Theorem \ref{restriction-subsequence-thm}, for $\alpha=1$.
\end{remark}

\begin{theorem}\label{improved-subsequence-thm}
	Let $k\geq 2$, $r\geq 1$, and $\alpha\in[0,rk-1]$ be integers. Let $m\in [1,k]$ be an integer such that $(m-1)r\leq \alpha<mr$. Let $\mathcal{A}$ be a sequence of $rk$ terms which is made up of $n$ negative integers and $p$ positive integers each repeated exactly $r$ times.  	
	\begin{enumerate}[label=\upshape(\roman*)]
		\item If $m \leq n$ and $m \leq p$, then 
		$|\Sigma_{\alpha} (\mathcal{A})| \geq r\left(\frac{n(n+1)}{2}+\frac{p(p+1)}{2}\right)+1$.
		
		\item If $m \leq n$ and $m > p$, then
		$|\Sigma_{\alpha} (\mathcal{A})| 
		\geq r\left(\frac{n(n+1)}{2}+\frac{p(p+1)}{2}
		-\frac{(m-p)(m-p+1)}{2}\right)+(m-p)(mr-\alpha)+1$.
		
		\item If $m > n$ and $m \leq p$, then
		$|\Sigma_{\alpha} (\mathcal{A})| 
		\geq r\left(\frac{n(n+1)}{2}+\frac{p(p+1)}{2}
		-\frac{(m-n)(m-n+1)}{2}\right)+(m-n)(mr-\alpha)+1$.
		
		\item If $m > n$ and $m > p$, then 
		$|\Sigma_{\alpha} (\mathcal{A})| \geq r\left(\frac{n(n+1)}{2}+\frac{p(p+1)}{2}-\frac{(m-n)(m-n+1)}{2}
		-\frac{(m-p)(m-p+1)}{2}\right)+(2m-n-p)(mr-\alpha)+1$.
	\end{enumerate}
	These lower bounds are optimal.
\end{theorem}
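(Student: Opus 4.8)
The plan is to reduce this to the set case exactly as in the proof of Theorem~\ref{restriction-subsequence-thm}, but feeding in the sharper bound of Theorem~\ref{improved-subset-thm} in place of Theorem~\ref{emptyset-thm}. Let $A$ be the set of $k=n+p$ distinct terms of $\mathcal{A}$, and write $A=A_{n}\cup A_{p}$, where $A_{n}$ consists of the $n$ negative elements and $A_{p}$ of the $p$ positive elements. Since $(m-1)r\le\alpha<mr$, write $\alpha=(m-1)r+u$ with $0\le u<r$, so that $mr-\alpha=r-u$. The first step is the containment
\[
(r-u)\,\Sigma_{m-1}(A)\;+\;u\,\Sigma_{m}(A)\;\subseteq\;\Sigma_{\alpha}(\mathcal{A}),
\]
which holds because a sum of $r-u$ subset sums of $A$ of size $\ge m-1$ together with $u$ subset sums of size $\ge m$ uses each element of $A$ at most $(r-u)+u=r$ times, hence is a subsequence sum of $\mathcal{A}$, and the number of terms involved is at least $(r-u)(m-1)+um=(m-1)r+u=\alpha$ (when $u=0$ the second summand is $\{0\}$ and the containment reads $r\,\Sigma_{m-1}(A)\subseteq\Sigma_{\alpha}(\mathcal{A})$).

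Combining this containment with Theorem~\ref{sumset-thm} and Theorem~\ref{multifold-sumset-thm} gives
\[
|\Sigma_{\alpha}(\mathcal{A})|\;\ge\;(r-u)\,|\Sigma_{m-1}(A)|+u\,|\Sigma_{m}(A)|-r+1 .
\]
Next I would bound $|\Sigma_{m-1}(A)|$ and $|\Sigma_{m}(A)|$ by Theorem~\ref{improved-subset-thm}, splitting into the four cases of the statement according to how $m$ compares with $n$ and $p$; note that $m-1$ then lies in the same regime or on its boundary. In case (i), $m\le n$ and $m\le p$, both terms are at least $\tfrac{n(n+1)}{2}+\tfrac{p(p+1)}{2}+1$ and the bound collapses to $r\bigl(\tfrac{n(n+1)}{2}+\tfrac{p(p+1)}{2}\bigr)+1$. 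In the other cases one uses the elementary identity $\tfrac{t(t+1)}{2}-\tfrac{(t-1)t}{2}=t$, together with the convention that $\tfrac{t(t+1)}{2}=0$ at $t=0$ (this covers the boundary subcases $m-1=n$ or $m-1=p$, where $\Sigma_{m-1}(A)$ falls into a different case of Theorem~\ref{improved-subset-thm} than $\Sigma_{m}(A)$ but the two formulas still agree): for $j\in\{n,p\}$,
\[
(r-u)\cdot\frac{(m-1-j)(m-j)}{2}+u\cdot\frac{(m-j)(m-j+1)}{2}
= r\cdot\frac{(m-j)(m-j+1)}{2}-(m-j)(r-u).
\]
Carrying this out in case (iv) ($m>n$, $m>p$) produces exactly $r\bigl(\tfrac{n(n+1)}{2}+\tfrac{p(p+1)}{2}-\tfrac{(m-n)(m-n+1)}{2}-\tfrac{(m-p)(m-p+1)}{2}\bigr)+(2m-n-p)(mr-\alpha)+1$; cases (ii) and (iii) are the same computation with one of the two correction terms absent, and case (iii) also follows from case (ii) applied to $-\mathcal{A}$.

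For optimality I would take $\mathcal{A}=\bigl([-n,p]\setminus\{0\}\bigr)_{r}$. One checks that $\Sigma_{\alpha}(\mathcal{A})$ is precisely the interval of integers from its minimum to its maximum: the maximum is obtained by taking all $rp$ positive terms together with, if $\alpha>rp$, the $\alpha-rp$ least–magnitude negative terms needed to reach length $\alpha$; the minimum is symmetric; and a direct count of this interval reproduces the four claimed values (the arithmetic is the mirror of the $j=n,p$ identity above). The main obstacle is twofold: keeping the repetition bookkeeping correct in the containment (each $a\in A$ used at most $r$ times across the $r$ chosen subsets), and the boundary subcases where $m-1$ and $m$ lie in different cases of Theorem~\ref{improved-subset-thm} — both are handled cleanly once one observes that $\tfrac{t(t+1)}{2}$ vanishes at $t=0$ — plus the routine verification that the extremal $\Sigma_{\alpha}(\mathcal{A})$ contains no gaps.
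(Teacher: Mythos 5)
Your proof is correct, but it takes a genuinely different route from the paper's. The paper proves this theorem by working at the sequence level throughout: in each case it exhibits $\Sigma_{\alpha}(\mathcal{A})$ as containing a union of two explicitly constructed sumsets (one anchored at ``all positive terms included,'' one built from the negative part), checks that the two pieces meet in at most one point, and then invokes Theorem~\ref{sumset-thm}, Theorem~\ref{multifold-sumset-thm}, Theorem~\ref{emptyset-thm}, and Theorem~\ref{restriction-subsequence-thm} --- i.e.\ it replays, for sequences, the same union-plus-disjointness decomposition used to prove Theorem~\ref{improved-subset-thm}. You instead lift the single containment $(r-u)\Sigma_{m-1}(A)+u\Sigma_{m}(A)\subseteq\Sigma_{\alpha}(\mathcal{A})$ from the proof of Theorem~\ref{restriction-subsequence-thm} and feed the already-established set-level bounds of Theorem~\ref{improved-subset-thm} into it, so that all four cases reduce to one algebraic identity, $(r-u)\tfrac{(t-1)t}{2}+u\tfrac{t(t+1)}{2}=r\tfrac{t(t+1)}{2}-t(r-u)$ with $t=m-j$. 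This is arguably cleaner: it avoids redoing the union decompositions and intersection checks at the sequence level, and you correctly flag and dispose of the only real subtlety, namely the boundary situations where $m-1$ and $m$ fall under different cases of Theorem~\ref{improved-subset-thm} (harmless because the correction term $\tfrac{t(t+1)}{2}$ vanishes at $t=0$). What the paper's approach buys in exchange is a template that does not depend on the interval $[(m-1)r,mr)$ trick and hence generalizes more directly to the non-uniform-multiplicity setting raised in the open problems. Your optimality verification via $\mathcal{A}=([-n,p]\setminus\{0\})_{r}$ is the paper's example and is treated at the same (sketchy) level of detail as in the paper, so nothing is lost there.
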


\begin{proof}
	Let $A_n$ and $A_p$ be sets that contain respectively, all distinct negative terms and all distinct positive terms of  $\mathcal{A}$. Then $|A_n|=n$ and $|A_p|=p$. Let also $A_{n}=\{b_{1}, b_{2}, \ldots, b_{n}\}$ and $A_{p}=\{c_{1}, c_{2}, \ldots,c_{p}\}$, where $b_{n}<b_{n-1}<\cdots<b_{1}<0<c_{1}<c_{2}<\cdots<c_{p}$.

	\noindent $\mathtt{(i)}$ If $m \leq n$ and $m \leq p$, then
	\[r(\Sigma_{m} (A_{n})+\Sigma_{0} (A_{p})) 
	\cup \left(\Sigma^{1}((b_{1}, \ldots, b_{m})_{r})+\sum_{j=1}^{p} rc_{j}\right) \subset \Sigma_{\alpha} (\mathcal{A})\]
	with 
	\[r(\Sigma_{m} (A_{n})+\Sigma_{0} (A_{p})) 
	\cap \left(\Sigma^{1}((b_{1}, \ldots, b_{m})_{r})+\sum_{j=1}^{p} rc_{j}\right) = \emptyset.\] 
	Hence, by Theorem \ref{sumset-thm}, Theorem \ref{multifold-sumset-thm}, Theorem \ref{emptyset-thm}, and Theorem \ref{restriction-subsequence-thm}, we have
	\begin{align*}
		|\Sigma_{\alpha} (\mathcal{A})|
		&\geq |r(\Sigma_{m} (A_{n})+\Sigma_{0} (A_{p}))|
		+|\Sigma^{1}((b_{1}, \ldots, b_{m})_{r})|\\
		&\geq r|\Sigma_{m} (A_{n})|+r|\Sigma_{0} (A_{p})|+|\Sigma^{1}((b_{1}, \ldots, b_{m})_{r})|-2r+1\\
		&\geq r\left(\frac{n(n+1)}{2}-\frac{m(m+1)}{2}+1\right)
		+r\left(\frac{p(p+1)}{2}+1\right)+\frac{rm(m+1)}{2}-2r+1\\
		&= r\left(\frac{n(n+1)}{2}+\frac{p(p+1)}{2}\right)+1.
	\end{align*}

	\noindent $\mathtt{(ii)}$ If $m \leq n$ and $m > p$, then
	\[r(\Sigma_{m} (A_{n})+\Sigma_{0} (A_{p})) 
	\cup \left(\Sigma_{\alpha-pr} ((b_{1}, \ldots, b_{m})_{r})
	+\sum_{j=1}^{p}rc_{j}\right)
	\subset \Sigma_{\alpha} (\mathcal{A})\]
	with 
	\[r(\Sigma_{m} (A_{n})+\Sigma_{0} (A_{p})) \cap \left(\Sigma_{\alpha-pr} ((b_{1}, \ldots, b_{m})_{r})+\sum_{j=1}^{p}rc_{j}\right) 
	= \left\{\sum_{j=1}^{m}rb_{j}+\sum_{j=1}^{p}rc_{j}\right\}.\]  
	Hence, by Theorem \ref{sumset-thm}, Theorem \ref{multifold-sumset-thm}, Theorem \ref{emptyset-thm}, and Theorem \ref{restriction-subsequence-thm}, we have
	\begin{align*}
		|\Sigma_{\alpha} (\mathcal{A})|
		&\geq |r(\Sigma_{m} (A_{n})+\Sigma_{0} (A_{p}))|
		+|\Sigma_{\alpha-pr} ((b_{1}, \ldots, b_{m})_{r})|-1\\
		&\geq r|\Sigma_{m} (A_{n})|+r|\Sigma_{0} (A_{p})|+|\Sigma_{\alpha-pr} ((b_{1}, \ldots, b_{m})_{r})|-2r+1\\
		&\geq r\left(\frac{n(n+1)}{2}-\frac{m(m+1)}{2}+1\right)+r\left(\frac{p(p+1)}{2}+1\right)\\
		&\quad+ r\left(\frac{m(m+1)}{2}-\frac{(m-p)(m-p+1)}{2}\right)+(m-p)(mr-\alpha)-2r+1\\
		&= r\left(\frac{n(n+1)}{2}+\frac{p(p+1)}{2}
		-\frac{(m-p)(m-p+1)}{2}\right)+(m-p)(mr-\alpha)+1.
	\end{align*}

	\noindent $\mathtt{(iii)}$ If $m > n$ and $m \leq p$, then by applying the result of (ii) for $(-\mathcal{A})$, we obtain 
	\[|\Sigma_{\alpha} (\mathcal{A})| 
	= |\Sigma_{\alpha} (-\mathcal{A})| 
	\geq r\left(\frac{n(n+1)}{2}+\frac{p(p+1)}{2}
	-\frac{(m-n)(m-n+1)}{2}\right)+(m-n)(mr-\alpha)+1.\]

	\noindent $\mathtt{(iv)}$ If $m > n$ and $m > p$, then
	\begin{align*}
		\left(\sum_{j=1}^{n}rb_{j}+\Sigma_{\alpha-nr} ((A_{p})_{r})\right) 
		\cup \left(\Sigma_{\alpha-pr} ((A_{n})_{r})
		+\sum_{j=1}^{p}rc_{j}\right)
		\subset \Sigma_{\alpha} (\mathcal{A})
	\end{align*}
	with
	\[\left(\sum_{j=1}^{n}rb_{j}+\Sigma_{\alpha-nr} ((A_{p})_{r})\right) 
	\cap \left(\Sigma_{\alpha-pr} ((A_{n})_{r})
	+\sum_{j=1}^{p}rc_{j}\right) 
	= \left\{\sum_{j=1}^{n}rb_{j}+\sum_{j=1}^{p}rc_{j}\right\}.\] 
	Hence, by Theorem \ref{restriction-subsequence-thm}, we have
	\begin{align*}
		|\Sigma_{\alpha} (\mathcal{A})|
		&\geq |\Sigma_{\alpha-nr} ((A_{p})_{r})|
		+|\Sigma_{\alpha-pr} ((A_{n})_{r})|-1\\
		&\geq r\left(\frac{p(p+1)}{2}-\frac{(m-n)(m-n+1)}{2}\right)
		+(m-n)(mr-\alpha)\\
		&\quad +r\left(\frac{n(n+1)}{2}-\frac{(m-p)(m-p+1)}{2}\right)
		+(m-p)(mr-\alpha)+1\\
		&= r\left(\frac{n(n+1)}{2}+\frac{p(p+1)}{2}-\frac{(m-n)(m-n+1)}{2}
		-\frac{(m-p)(m-p+1)}{2}\right)\\
		&\quad +(2m-n-p)(mr-\alpha)+1.
	\end{align*}
	
	Furthermore, the optimality of the lower bounds in (i)--(iv) can be verified by taking $\mathcal{A}=[-n, p]_{r}\setminus(0)_{r}$.
\end{proof}

\begin{corollary}\label{improved-subsequence-cor}
	Let $k\geq 2$, $r\geq 1$, and $\alpha\in[0,rk-1]$ be integers. Let $m\in [1,k]$ be an integer such that $(m-1)r\leq \alpha<mr$. Let $\mathcal{A}$ be a sequence of $rk$ terms which is made up of $n$ negative integers, $p$ positive integers and zero, each repeated exactly $r$ times.  
	\begin{enumerate}[label=\upshape(\roman*)]
		\item If $m \leq n$ and $m \leq p$, then $|\Sigma_{\alpha} (\mathcal{A})| \geq r\left(\frac{n(n+1)}{2}+\frac{p(p+1)}{2}\right)+1$.
		
		\item If $m \leq n$ and $m > p$, then
		$|\Sigma_{\alpha} (\mathcal{A})| 
		\geq r\left(\frac{n(n+1)}{2}+\frac{p(p+1)}{2}
		-\frac{(m-p)(m-p-1)}{2}\right)+(m-p-1)(mr-\alpha)+1$.
		
		\item If $m > n$ and $m \leq p$, then
		$|\Sigma_{\alpha} (\mathcal{A})| 
		\geq r\left(\frac{n(n+1)}{2}+\frac{p(p+1)}{2}
		-\frac{(m-n)(m-n-1)}{2}\right)+(m-n-1)(mr-\alpha)+1$.
		
		\item If $m > n$ and $m > p$, then 
		$|\Sigma_{\alpha} (\mathcal{A})| 
		\geq r\left(\frac{n(n+1)}{2}+\frac{p(p+1)}{2}
		-\frac{(m-n)(m-n-1)}{2}-\frac{(m-p)(m-p-1)}{2}\right)+(2m-n-p-2)(mr-\alpha)+1$.
	\end{enumerate}
	These lower bounds are optimal.
\end{corollary}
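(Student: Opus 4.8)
The plan is to reduce everything to Theorem \ref{improved-subsequence-thm} by deleting the $r$ copies of $0$, exactly as Corollary \ref{improved-subset-cor} was deduced from Theorem \ref{improved-subset-thm}. Put $\mathcal{A}' = \mathcal{A}\setminus(0)_{r}$; then $\mathcal{A}'$ is a sequence of $r(n+p)$ terms made up of $n$ negative and $p$ positive integers each repeated $r$ times. The first step is the exact identity
\[
\Sigma_{\alpha}(\mathcal{A}) = \Sigma_{\beta}(\mathcal{A}'), \qquad \beta := \max\{\alpha - r,\,0\}.
\]
Indeed, a subsequence of $\mathcal{A}$ of length $\geq \alpha$ is $j$ copies of $0$ (for some $0\leq j\leq r$) together with a subsequence of $\mathcal{A}'$ of length $\geq \alpha-j \geq \alpha-r$, with the same sum; conversely a subsequence of $\mathcal{A}'$ of length $\geq \beta$ can be padded with all $r$ zeros to a subsequence of $\mathcal{A}$ of length $\geq \beta + r \geq \alpha$ with the same sum. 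Hence $|\Sigma_{\alpha}(\mathcal{A})| = |\Sigma_{\beta}(\mathcal{A}')|$.

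Next I would track the parameters through this identity. If $\alpha < r$, then $m = 1$, $\beta = 0$, and the integer attached to $(\mathcal{A}',\beta)$ is $m' = 1$; applying part (i) of Theorem \ref{improved-subsequence-thm} to $(\mathcal{A}', 0, 1)$ (legitimate since then $m = 1 \leq n$ and $m = 1 \leq p$) yields exactly the bound of part (i) of the corollary. If $\alpha \geq r$, then $m \geq 2$ and $(m-1)r \leq \alpha < mr$ becomes $(m-2)r \leq \beta < (m-1)r$, so $\mathcal{A}'$ carries the parameter $m' = m-1 \in [1, n+p]$, and $(m-1)r - \beta = mr - \alpha$. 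Since $m'$ and $m$ differ by $1$, the case split of the corollary (comparing $m$ with $n$ and $p$) maps onto the case split of Theorem \ref{improved-subsequence-thm} for $\mathcal{A}'$ (comparing $m' = m-1$ with $n$ and $p$): part (i) of the corollary lands in part (i) of the theorem, and likewise (ii), (iii), (iv) land in the corresponding parts, with the boundary sub-cases $m = p+1$ and $m = n+1$ (where $m' = p$, resp.\ $m' = n$) falling into part (i) of the theorem — precisely the sub-cases where the extra correction terms $\tfrac{(m-p)(m-p-1)}{2}$, $(m-p-1)(mr-\alpha)$, and their analogues in the corollary vanish. Substituting $m' = m-1$ and $(m-1)r - \beta = mr-\alpha$ into the bounds of Theorem \ref{improved-subsequence-thm}, and simplifying via $(m'-p)(m'-p+1) = (m-p)(m-p-1)$, $(m'-n)(m'-n+1) = (m-n)(m-n-1)$, and $2m'-n-p = 2m-n-p-2$, produces exactly the four inequalities claimed.

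For optimality I would take $\mathcal{A} = [-n,p]_{r}$, whose zero-free part $[-n,p]_{r}\setminus(0)_{r}$ is the sequence used to establish optimality in Theorem \ref{improved-subsequence-thm}. Then $|\Sigma_{\alpha}(\mathcal{A})| = |\Sigma_{\beta}(\mathcal{A}')|$ equals the lower bound of Theorem \ref{improved-subsequence-thm} for $(\mathcal{A}',\beta,m')$, which by the identities above is the corresponding lower bound in the corollary; so every bound is attained.

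The only thing requiring care is the bookkeeping in the middle step: keeping the corollary's four-way case division in lockstep with that of Theorem \ref{improved-subsequence-thm} after the shift $m \mapsto m-1$, $\alpha \mapsto \beta$, and checking the degenerate sub-cases. I expect no substantive difficulty beyond this routine verification.
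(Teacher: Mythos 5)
Your proposal is correct and is essentially the paper's own proof, which likewise deletes the block $(0)_{r}$ and invokes Theorem \ref{improved-subsequence-thm} for $\mathcal{A}'=\mathcal{A}\setminus(0)_{r}$ via the identity $\Sigma_{\alpha}(\mathcal{A})=\Sigma_{0}(\mathcal{A}')$ for $0\leq\alpha<r$ and $\Sigma_{\alpha}(\mathcal{A})=\Sigma_{\alpha-r}(\mathcal{A}')$ for $\alpha\geq r$, with optimality checked on $[-n,p]_{r}$. Your explicit bookkeeping ($m'=m-1$, $m'r-\beta=mr-\alpha$, and the boundary sub-cases where $m'=n$ or $m'=p$) all verifies and is in fact more detailed than the paper's one-line justification.
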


\begin{proof}
	The lower bounds for $|\Sigma_{\alpha} (\mathcal{A})|$ easily follows from Theorem \ref{improved-subsequence-thm} and the fact that $\Sigma_{\alpha} (\mathcal{A}) = \Sigma_{0} (\mathcal{A}^\prime)$ for $0 \leq \alpha < r$ and $\Sigma_{\alpha} (\mathcal{A}) = \Sigma_{\alpha-r} (\mathcal{A}^\prime)$ for $r \leq \alpha < rk$, where $\mathcal{A^{\prime}}=\mathcal{A}\setminus(0)_{r}$. Furthermore, the optimality of these bounds can be verified by taking $\mathcal{A}=[-n, p]_{r}$.
\end{proof}

\begin{corollary}\label{general-subsequence-cor}
	Let $k\geq 3$, $r\geq 1$, and $\alpha\in[0,rk-1]$ be integers. Let $m\in [1,k]$ be an integer such that $(m-1)r\leq \alpha<mr$. Let $\mathcal{A}$ be a sequence of $rk$ terms which is made up of $k$ distinct integers each repeated exactly $r$ times. If $0\notin \mathcal{A}$, then
	\begin{equation*}
		|\Sigma_{\alpha} (\mathcal{A})| \geq
		\begin{cases}
			r\left(\frac{(k+1)^2}{4}-\frac{m(m+1)}{2}\right)+1 & \text{if } k \equiv 1 \pmod{2}\\
			r\left(\frac{(k+1)^2-1}{4}-\frac{m(m+1)}{2}\right)+1 & \text{if } k \equiv 0 \pmod{2}.
		\end{cases}
	\end{equation*}
	If $0\in \mathcal{A}$, then
	\begin{equation*}
		|\Sigma_{\alpha} (\mathcal{A})| \geq
		\begin{cases}
			r\left(\frac{k^2-1}{4}-\frac{m(m-1)}{2}\right)+1 & \text{if } k \equiv 1 \pmod{2}\\
			r\left(\frac{k^2}{4}-\frac{m(m-1)}{2}\right)+1 & \text{if } k \equiv 0 \pmod{2}.
		\end{cases}
	\end{equation*}
\end{corollary}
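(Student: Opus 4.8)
The approach mirrors the proof of Corollary~\ref{general-subset-cor}: reduce to Theorems~\ref{restriction-subsequence-thm} and~\ref{improved-subsequence-thm}, and then optimize over the ways the $k$ distinct values of $\mathcal{A}$ split into negatives and positives. I would first settle the case $0\notin\mathcal{A}$ and then deduce the case $0\in\mathcal{A}$ by deleting the zeros.

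Assume $0\notin\mathcal{A}$. If all $k$ distinct values share a sign, then $\mathcal{A}\cap(-\mathcal{A})=\emptyset$, and Theorem~\ref{restriction-subsequence-thm} applies; since $\frac{k(k+1)}{2}\ge\lfloor(k+1)^2/4\rfloor$ for $k\ge1$ and $m(mr-\alpha)\ge0$, its bound already dominates the claimed one. Otherwise $\mathcal{A}$ has $n\ge1$ distinct negative values and $p\ge1$ distinct positive values with $n+p=k$, and I invoke Theorem~\ref{improved-subsequence-thm}. Each of its four bounds has the form $r\bigl(\frac{n(n+1)}{2}+\frac{p(p+1)}{2}\bigr)-(\text{defect})+1$, where the defect is $0$ in case~(i), equals $r\frac{(m-p)(m-p+1)}{2}-(m-p)(mr-\alpha)$ in case~(ii) (and the mirror expression in~(iii)), and is the sum of two such expressions, one for $m-n$ and one for $m-p$, in case~(iv). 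Because $(m-1)r\le\alpha<mr$, each such expression is nonnegative and lies strictly below $r\frac{(m-p)(m-p+1)}{2}$ (resp.\ $r\frac{(m-n)(m-n+1)}{2}$); hence the total defect is strictly less than $r\frac{m(m+1)}{2}$. In cases~(i)--(iii) this is immediate because $m-p\le m-1$, while in case~(iv) it comes down to the inequality $(m-n)(m-n+1)+(m-p)(m-p+1)\le m(m+1)$, which is the crux. Writing $a=m-n\ge1$ and $b=m-p\ge1$, we have $a+b=2m-k\le m$ (since $m\le k$), and $(a-1)(b-1)\ge0$ gives $ab\ge a+b-1$, so $a(a+1)+b(b+1)=(a^2+b^2)+(a+b)\le(a+b)^2-(a+b)+2\le m^2-m+2\le m(m+1)$. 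It follows that each bound of Theorem~\ref{improved-subsequence-thm} is at least $r\bigl(\frac{n(n+1)}{2}+\frac{p(p+1)}{2}-\frac{m(m+1)}{2}\bigr)+1$. Finally $\frac{n(n+1)}{2}+\frac{(k-n)(k-n+1)}{2}=(n-\frac k2)^2+\frac{(k+1)^2-1}{4}$, whose minimum over integers $n$ with $n+p=k$ is $\lfloor(k+1)^2/4\rfloor$, so $|\Sigma_\alpha(\mathcal{A})|\ge r\bigl(\lfloor(k+1)^2/4\rfloor-\frac{m(m+1)}{2}\bigr)+1$; separating by the parity of $k$ gives the two displayed formulas for $0\notin\mathcal{A}$.

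Now suppose $0\in\mathcal{A}$ and put $\mathcal{A}'=\mathcal{A}\setminus(0)_r$, a sequence on $k-1\ge2$ distinct nonzero values, each of multiplicity $r$. As in the proof of Corollary~\ref{improved-subsequence-cor}, $\Sigma_\alpha(\mathcal{A})=\Sigma_0(\mathcal{A}')$ when $0\le\alpha<r$ and $\Sigma_\alpha(\mathcal{A})=\Sigma_{\alpha-r}(\mathcal{A}')$ when $r\le\alpha<rk$. In the second range $m\ge2$ and $(m-2)r\le\alpha-r<(m-1)r$, so the case $0\notin\mathcal{A}'$ just proved, applied with $k$ replaced by $k-1$ and $m$ by $m-1$ and using $\lfloor((k-1)+1)^2/4\rfloor=\lfloor k^2/4\rfloor$, yields $|\Sigma_\alpha(\mathcal{A})|\ge r\bigl(\lfloor k^2/4\rfloor-\frac{m(m-1)}{2}\bigr)+1$, which is the claim. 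In the first range $m=1$, and I argue directly on $\mathcal{A}'$ with threshold $0$: whichever of Theorem~\ref{restriction-subsequence-thm} (if $\mathcal{A}'$ is one-signed) or part~(i) of Theorem~\ref{improved-subsequence-thm} (otherwise) applies, the bound carries no defect, so $|\Sigma_0(\mathcal{A}')|\ge r\frac{(k-1)k}{2}+1$ or $r\bigl(\frac{n'(n'+1)}{2}+\frac{p'(p'+1)}{2}\bigr)+1$, and in either case this is $\ge r\lfloor k^2/4\rfloor+1$, matching the claim at $m=1$. Separating by the parity of $k$ again produces the stated formulas.

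I expect the main obstacle to be the mixed-sign part of the case $0\notin\mathcal{A}$: one has to track how the auxiliary threshold shifts (from $\alpha$ to $\alpha-pr$ and $\alpha-nr$) and how the integer $m$ becomes $m-p$ and $m-n$ when passing to the blocks $A_n$ and $A_p$ inside Theorem~\ref{improved-subsequence-thm}, and then verify the elementary inequality $a(a+1)+b(b+1)\le m(m+1)$ for $a,b\ge1$ with $a+b\le m$. Everything else is a citation of the earlier theorems or routine arithmetic, including the harmless edge case $m=1$ of the reduction, where retaining the defect-free form of the bound is exactly what prevents a spurious loss of a factor~$r$.
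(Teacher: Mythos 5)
Your proposal is correct and follows essentially the same route as the paper: reduce to the mixed-sign situation, show every bound of Theorem~\ref{improved-subsequence-thm} dominates $r\bigl(\frac{n(n+1)}{2}+\frac{p(p+1)}{2}-\frac{m(m+1)}{2}\bigr)+1$, and minimize the quadratic over the split $n+p=k$ by parity. The only differences are cosmetic: you make explicit the inequality $(m-n)(m-n+1)+(m-p)(m-p+1)\le m(m+1)$ that the paper asserts implicitly when invoking Theorem~\ref{improved-subsequence-thm}, and you treat $0\in\mathcal{A}$ by deleting the zeros and shifting $(k,m,\alpha)$ to $(k-1,m-1,\alpha-r)$, which is exactly how the paper's Corollary~\ref{improved-subsequence-cor} (the tool it cites for that case) is itself proved.
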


\begin{proof}
	Note that
	\begin{equation*}
		\frac{rk(k+1)}{2} \geq
		\begin{cases}
			\frac{r(k+1)^2}{4}+1 & \text{if } k \equiv 1 \pmod{2}\\
			\frac{r((k+1)^2-1)}{4}+1 & \text{if } k \equiv 0 \pmod{2}
		\end{cases}
	\end{equation*}
	and
	\begin{equation*}
		\frac{rk(k-1)}{2}+1 \geq
		\begin{cases}
			\frac{r(k^2-1)}{4}+1 & \text{if } k \equiv 1 \pmod{2}\\
			\frac{rk^2}{4}+1 & \text{if } k \equiv 0 \pmod{2}
		\end{cases}
	\end{equation*}
	for $k\geq 3$. If $0 \notin \mathcal{A}$ and $|\Sigma_{\alpha} (\mathcal{A})| \geq r\left(\frac{k(k+1)}{2}-\frac{m(m+1)}{2}\right)+m(mr-\alpha)+1$, then we are done. So, let $|\Sigma_{\alpha} (\mathcal{A})| < r\left(\frac{k(k+1)}{2}-\frac{m(m+1)}{2}\right)+m(mr-\alpha)+1$ when $0 \notin \mathcal{A}$. Then, Theorem \ref{restriction-subsequence-thm} implies that $\mathcal{A}$ contains both positive and negative integers. By similar arguments, when $0 \in \mathcal{A}$ also, we can assume that $\mathcal{A}$ contains both positive and negative integers. So, in both the cases $0 \in \mathcal{A}$ and $0 \notin \mathcal{A}$, we can assume that $\mathcal{A}$ contains both positive and negative integers.	Let $A_n$ and $A_p$ be sets that contain respectively, all distinct negative terms and all distinct positive terms of sequence $\mathcal{A}$. Let also $|A_n| = n$ and $|A_p| = p$. Then $n \geq 1$ and $p \geq 1$.
	
	\noindent $\mathtt{Case~1}$. ($0\notin \mathcal{A}$). By Theorem \ref{improved-subsequence-thm}, we have
	\[|\Sigma_{\alpha} (\mathcal{A})|
	\geq r\left(\frac{n(n+1)}{2}+\frac{p(p+1)}{2} -\frac{m(m+1)}{2}\right)+1\]
	for all $\alpha \in [0, rk-1]$. Therefore
	\begin{align*}
		|\Sigma_{\alpha} (\mathcal{A})|
		&\geq r\left(\frac{n(n+1)}{2}+\frac{(k-n)(k-n+1)}{2}
		-\frac{m(m+1)}{2}\right)+1\\
		&= r\left(\left(n-\frac{k}{2}\right)^{2}+\frac{k^2+2k}{4}-\frac{m(m+1)}{2}\right)+1.
	\end{align*}
	Since $k=n+p$, without loss of generality we may assume that $n\geq \lceil k/2\rceil$. If $k \equiv 1 \pmod{2}$, then $k=2t+1$ for some positive integer $t$. Hence
	\begin{align*}
		|\Sigma_{\alpha} (\mathcal{A})|
		&\geq r\left(\left(n-t-\frac{1}{2}\right)^{2}+\frac{k^2+2k}{4}-\frac{m(m+1)}{2}\right)+1\\
		&= r\left((n-t)(n-t-1)+\frac{(k+1)^2}{4}-\frac{m(m+1)}{2}\right)+1\\
		&\geq r\left(\frac{(k+1)^2}{4}-\frac{m(m+1)}{2}\right)+1.
	\end{align*}
	If $k \equiv 0 \pmod{2}$, then $k=2t$ for some positive integer $t$. Without loss of generality we may assume that $n \geq t$. Hence
	\begin{align*}
		|\Sigma_{\alpha} (\mathcal{A})|
		&\geq r\left((n-t)^{2}+\frac{k^2+2k}{4}-\frac{m(m+1)}{2}\right)+1\\
		&\geq r\left(\frac{(k+1)^2-1}{4}-\frac{m(m+1)}{2}\right)+1.
	\end{align*}

	\noindent $\mathtt{Case~2}$. ($0 \in \mathcal{A}$). By Corollary \ref{improved-subsequence-cor}, we have
	\[|\Sigma_{\alpha} (\mathcal{A})|
	\geq r\left(\frac{n(n+1)}{2}+\frac{p(p+1)}{2} -\frac{m(m-1)}{2}\right)+1\]
	for all $\alpha \in [0, rk-1]$. Therefore
	\begin{align*}
		|\Sigma_{\alpha} (\mathcal{A})|
		&\geq r\left(\left(n-\frac{k-1}{2}\right)^{2}+\frac{k^2-1}{4}-\frac{m(m-1)}{2}\right)+1
	\end{align*}
	Since $k=n+p+1$, without loss of generality we may assume that $n\geq \lceil (k-1)/2 \rceil$. If $k \equiv 1 \pmod{2}$, then $k=2t+1$ for some positive integer $t$. Hence
	\begin{align*}
		|\Sigma_{\alpha} (\mathcal{A})|
		&\geq r\left((n-t)^2+\frac{k^2-1}{4}-\frac{m(m-1)}{2}\right)+1\\
		&\geq r\left(\frac{k^2-1}{4}-\frac{m(m-1)}{2}\right)+1.
	\end{align*}
	If $k \equiv 0 \pmod{2}$, then $k=2t$ for some positive integer $t$. Hence
	\begin{align*}
		|\Sigma_{\alpha} (\mathcal{A})|
		&\geq r\left(\left(n-t+\frac{1}{2}\right)^{2}+\frac{k^2-1}{4}-\frac{m(m-1)}{2}\right)+1\\
		&= r\left((n-t)(n-t+1)+\frac{k^2}{4}-\frac{m(m-1)}{2}\right)+1\\
		&\geq  r\left(\frac{k^2}{4}-\frac{m(m-1)}{2}\right)+1.
	\end{align*}
	This completes the proof of the corollary.
\end{proof}

\begin{remark}
	Mistri and Pandey \cite{mistri16} have already proved this corollary for $\alpha=1$. The purpose of this corollary is to prove a similar result for every $\alpha \in [0, rk-1]$. Note that the lower bounds in Corollary \ref{general-subsequence-cor} are not optimal for all $\alpha \in [0, rk-1]$, except for $\alpha=0$ and $\alpha=1$.
\end{remark}

\section{Open problems}
\begin{enumerate}
	\item Along this line, it is important to find the optimal lower bound for $|\Sigma_{\alpha} (\mathcal{A})|$, for arbitrary finite sequence of integers $\mathcal{A} = (\underbrace{a_{1},\ldots,a_{1}}_{r_{1}~\text{copies}}, \underbrace{a_{2},\ldots,a_{2}}_{r_{2}~\text{copies}},\ldots, \underbrace{a_{k},\ldots,a_{k}}_{r_{k}~\text{copies}})$. When the sequence $\mathcal{A}$ contains nonnegative or nonpositive integers, we already have the optimal lower bound for $|\Sigma_{\alpha} (\mathcal{A})|$ (see \cite{bhanja20}). So, the only case that remains to study is when the sequence $\mathcal{A}$ contains both positive and negative integers. Note that, in this paper we settled this problem in the special case $r_{i}=r$ for all $i=1,2, \ldots, k$. 
	
	\item It is also an important problem to study the structure of the sequence $\mathcal{A}$ for which the lower bound for $|\Sigma_{\alpha} (\mathcal{A})|$ is optimal. When $\mathcal{A}$ contains nonnegative or nonpositive integers this problem has already been established (see \cite{bhanja20}). So, it remains to solve this problem when the sequence $\mathcal{A}$ contains both positive and negative integers. 
	
	\item For a finite set $H$ of nonnegative integers and a finite set $A$ of $k$ integers, define the sumsets	
	\[HA := \bigcup_{h \in H} hA,~~ H~\hat{}A := \bigcup_{h \in H} h~\hat{}A \text{ and } H^{(r)}A := \bigcup_{h \in H} h^{(r)}A.\] 
	Then $H~\hat{}A = \Sigma_{\alpha}(A)$ for $H = [\alpha, k]$, $H~\hat{}A = \Sigma^{\alpha}(A)$ for $H = [0, k-\alpha]$, $H^{(r)}A = \Sigma_{\alpha}(\mathcal{A})$ for $H = [\alpha, rk]$, and $H^{(r)}A = \Sigma^{\alpha}(\mathcal{A})$ for $H = [0, rk-\alpha]$, where $\mathcal{A} = (A)_{r}$. Along the same line with the sumsets $hA$, $h~\hat{}A$, and $\Sigma_{\alpha}(A)$, the first author established optimal lower bounds for $|HA|$ and $|H~\hat{}A|$, when $A$ contains nonnegative or nonpositive integers (see \cite{jagannath}). The author also characterized the sets $H$ and $A$ for which the lower bounds are achieved \cite{jagannath}. It will be interesting to generalize such results to the sumset $H^{(r)}A$. 
\end{enumerate}

	\section*{Acknowledgment}
	During the first author's time at the Indian Institute of Technology Roorkee and then at the Harish-Chandra Research Institute in Prayagraj, this work was completed. For both research facilities and financial assistance, the first author would like to thank both IIT-Roorkee and HRI-Prayagraj.

	\bibliographystyle{amsplain}

\end{document}